\let\counterwithin\relax  %DSA: i had to include this to be able to compile
\definecolor{dark-gray}{gray}{0.3}
\definecolor{dkgray}{rgb}{.4,.4,.4}
\definecolor{dkblue}{rgb}{0,0,.5}
\definecolor{medblue}{rgb}{0,0,.75}
\definecolor{rust}{rgb}{0.5,0.1,0.1}
\newtheoremstyle{myThm} % name
    {\topsep}                    % Space above
    {\topsep}                    % Space below
    {\itshape}                   % Body font
    {}                           % Indent amount
    {\sffamily\bfseries}                   % Theorem head font
    {.}                          % Punctuation after theorem head
    {.5em}                       % Space after theorem head
    {}  % Theorem head spec (can be left empty, meaning ‘normal’)
\newtheoremstyle{myRem} % name
    {\topsep}                    % Space above
    {\topsep}                    % Space below
    {}                   % Body font
    {}                           % Indent amount
    {\sffamily}                   % Theorem head font
    {.}                          % Punctuation after theorem head
    {.5em}                       % Space after theorem head
    {}  % Theorem head spec (can be left empty, meaning ‘normal’)
\newtheoremstyle{myDef} % name
    {\topsep}                    % Space above
    {\topsep}                    % Space below
    {}                   % Body font
    {}                           % Indent amount
    {\sffamily\bfseries}                   % Theorem head font
    {.}                          % Punctuation after theorem head
    {.5em}                       % Space after theorem head
    {}  % Theorem head spec (can be left empty, meaning ‘normal’)
\theoremstyle{myThm}
\newtheorem{theorem}{Theorem}[section]
\newtheorem{lemma}[theorem]{Lemma}
\newtheorem{proposition}[theorem]{Proposition}
\newtheorem{corollary}[theorem]{Corollary}
\theoremstyle{myRem}
\newtheorem{remark}[theorem]{Remark}
\theoremstyle{myDef}
\let\originalleft\left
\let\originalright\right
\renewcommand{\left}{\mathopen{}\mathclose\bgroup\originalleft}
\renewcommand{\right}{\aftergroup\egroup\originalright}
\renewcommand{\phi}{\varphi}
\newcommand{\eps}{\varepsilon}
\renewcommand{\L}{\mathcal{L}}
\definecolor{mygreen}{rgb}{0.1,0.75,0.2}
\newcommand{\Y}{\mathcal{Y}}
\newcommand{\X}{\mathcal{X}}
\newcommand{\M}{\mathcal{M}}
\newcommand{\prior}{\Pi_n}  % this used to be \Pi_n^{disc}
\newcommand{\Pim}{\Pi_n^\M}  % this used to be \Pi_n^{cont}
\newcommand{\pim}{\pi_n^\M}  % this used to be \Pi_n^{cont}
\newcommand{\wn}{W_n}  % this used to be w_n
\newcommand{\Wn}{W_n^\M}  % this used to be W_n
\newcommand{\dkl}{d_{\mbox {\tiny{\rm KL}}}}
\newcommand{\Nc}{\mathcal{N}}
\title{Unlabeled Data Help in Graph-Based Semi-Supervised Learning: \\ A Bayesian Nonparametrics Perspective} % CORRECT TITLE
\author{Daniel Sanz-Alonso and Ruiyi Yang}
\date{University of Chicago}
\makeatletter\@addtoreset{section}{part}\makeatother%
\numberwithin{equation}{section}
\newcommand{\upperRomannumeral}[1]{\uppercase\expandafter{\romannumeral#1}}
\renewcommand{\hat}{\widehat}
\begin{document}
\maketitle %  LEAVE HERE
% The command above causes the title to be displayed.

\begin{abstract}%   <- trailing '%' for backward compatibility of .sty file
In this paper we analyze the graph-based approach to semi-supervised learning under a manifold assumption. We adopt a Bayesian perspective and demonstrate that, for a suitable choice of prior constructed with sufficiently many unlabeled data, the posterior contracts around the truth at a rate that is minimax optimal  up to a logarithmic factor. Our theory covers both regression and classification. 
\end{abstract}

\section{Introduction}
Semi-supervised learning (SSL) refers to machine learning techniques that combine during training a small amount of labeled data with a large amount of unlabeled data. SSL has received increasing attention over the past decades because in many recent applications labeled data are expensive to collect while unlabeled data are abundant. Examples include analysis of body-worn videos and video surveillance \citep{qiao2019uncertainty}, text categorization and translation \citep{shi2010cross}, image classification \citep{zhu2005semi} and protein structure prediction \citep{weston2005semi}.  The aim of this paper is to investigate whether unlabeled data can enhance learning performance. The answer to such question necessarily depends on the model assumptions and the methodology employed. Our main contribution is to show that under a standard manifold assumption, unlabeled data are helpful when using graph-based methods in a Bayesian setting. We do so by establishing that the optimal posterior contraction rate is achieved (up to a logarithmic factor) provided that the size of the unlabeled dataset grows sufficiently fast with the size of the labeled dataset. 

The SSL problem of interest can be informally described as follows. Given labeled data $\{ (X_1,Y_1),\ldots,(X_n,Y_n) \}$ and unlabeled data $\{X_{n+1},\ldots,X_{N_n}\},$ the goal is to predict $Y$ from $X.$ More precisely, we are interested in the  SSL problem of inferring the regression function $f_0(x) :=\mathbb{E} (Y|X=x)$ at the given features $\X_{N_n} := \{X_1, \ldots, X_{N_n} \}$ in either of these two settings:
\begin{enumerate}
    \item Regression: $Y=f_0(X)+\eta$, where $\eta \sim \mathcal{N}(0,\sigma^2)$ with $\sigma^2$ known.
    \item  Binary classification: $\mathbb{P}(Y=1|X)=f_0(X)$. 
\end{enumerate}

We analyze graph-based methods applied to this inference task under the \emph{manifold assumption}  that $X$ takes values on a hidden manifold $\M$.  This assumption is natural when the features $\X_{N_n}$ live in a high-dimensional ambient space but admit a low-dimensional representation \citep{bickel2007local,niyogi2013manifold,trillos2019local,trillos2020consistency}.  For instance,  \cite{hein2005intrinsic,costa2006determining} demonstrate the low intrinsic dimension of standard datasets for image classification. Graph-based methods are well-suited under the manifold assumption as they allow to uncover the geometry of the hidden manifold and promote smoothness of the inferred $f_0$ along it. Indeed,  graph-based methods are among the most powerful classification techniques for SSL problems where similar features are expected to belong to the same class \citep{belkin2004regularization,belkin2004semi,zhu2005semi}. The central idea behind traditional graph-based methods is to  infer $f_0$ by minimizing an objective function comprising at least two terms:  (i) a regularization term constructed with a graph-Laplacian of the features $\mathcal{X}_{N_n},$ which leverages the ability of unsupervised graph-based techniques to extract geometric information from $\M$;  and (ii) a data-fidelity term that incorporates the labeled data. We adopt an analogous Bayesian perspective where: (i) the prior distribution $\prior(\cdot \,|\, \X_{N_n} )$ will be defined using a graph-Laplacian of the features $\X_{N_n}$  (hence the notation ``given $\X_{N_n}$'')  to extract geometric information from $\M$; and (ii) the likelihood function promotes matching the labeled data. Assuming the data are independent so that the likelihood factorizes, the posterior takes the form 
\begin{align}
    \prior(B \,| \, \X_{N_n},\Y_n)  \propto \int_B \prod_{i=1}^n L_{Y_i|X_i}(f) \,d\prior(f \,|\, \X_{N_n} ), \quad \quad  B\in \mathcal{B},  \label{eq:discrete posterior}
\end{align}
where $\Y_n := \{ Y_1, \ldots, Y_n\}$ and $\mathcal{B}$ is the Borel $\sigma$-algebra on $\mathbb{R}^{N_n}$ (here we are identifying functions over $\mathcal{X}_{N_n}$ with $\mathbb{R}^{N_n}$).
The conditional likelihood of $Y_i|X_i$ is given by
\begin{align}
    L_{Y_i|X_i}(f) =\frac{1}{\sqrt{2\pi\sigma^2}}\exp\left(-\frac{|Y_i-f(X_i)|^2}{2\sigma^2}\right) \label{eq:discrete likelihood 1}
\end{align}
in the regression setting and 
\begin{align}
    L_{Y_i|X_i}(f) =f(X_i)^{Y_i} \left[1-f(X_i)\right]^{1-Y_i}\label{eq:discrete likelihood 2}
\end{align}
in the classification setting.

We shall analyze our Bayesian approach in a frequentist perspective by assuming the data is generated from a fixed truth $f_0$ and studying the contraction rate of the posterior around $f_0$, defined as the sequence of real numbers $\varepsilon_n$ such that 
\begin{align*}
    \mathbb{E}_{X}\mathbb{E}_{f_0} \Pi_n\left(f: d_n(f,f_0)\geq M_n\varepsilon_n\,|\,\mathcal{X}_{N_n},\mathcal{Y}_{n}\right)\xrightarrow{n\rightarrow \infty} 0, 
\end{align*}
for any sequence $M_n\rightarrow \infty$ and some suitable semi-metric $d_n$. Here the double expectation $\mathbb{E}_X\mathbb{E}_{f_0}$ is taken first with respect to the conditional data distribution of $\mathcal{Y}_n|X_1,\ldots,X_n$ specified by $f_0$ and then with respect to the marginal of $\mathcal{X}_{N_n}$. The idea of posterior contraction rate was formally introduced in \cite{ghosal2000convergence,shen2001rates} and has since then become a popular criterion for analyzing Bayesian methods. In particular, contraction of the posterior with rate $\varepsilon_n$ gives a point estimator defined as 
\begin{align*}
    \widehat{f}_n=\underset{g}{\operatorname{arg\,max}}\,\Big[ \Pi_n(f:d_n(f,g)\leq M_n\varepsilon_n\,|\, \mathcal{X}_{N_n},\mathcal{Y}_n) \Big]
\end{align*}
that converges to $f_0$ with the same rate $\varepsilon_n$, which together with the minimax theory for statistical estimation quantifies the performance of our Bayesian SSL procedure.

Our main result, Theorem \ref{thm:Main Thm}, shows that the posterior \eqref{eq:discrete posterior} contracts around $f_0$  at the minimax optimal rate (up to logarithmic factors), provided that the prior distribution $\prior(\cdot \,|\, \X_{N_n} )$ is carefully designed and that $N_n$ grows at a certain polynomial rate with $n$. More broadly, our theory suggests that graph-based methods require abundant unlabeled data in order to effectively extract geometric information to regularize SSL problems.

\subsection{Related Work}
The question of whether unlabeled data enhance SSL performance has been widely studied.  Positive and negative conclusions have been reached depending on the assumptions made on the relationship between the target function and the marginal data distribution, and also on the methodology employed \citep{cozman2006risks,bickel2007local,liang2007use,wasserman2008statistical,niyogi2013manifold,singh2008unlabeled}. Two standard model assumptions  are the \emph{clustering assumption} \citep{seeger2000learning}, which states that the target function is smooth over high density regions,  and the \emph{manifold assumption} that we have introduced. The latter has been extensively used in the statistical learning literature, and specifically in SSL, e.g. \cite{bickel2007local,wasserman2008statistical,castillo2014thomas,yang2016bayesian,trillos2019local,trillos2020consistency}. 

 Several methodologies for SSL have been developed based on generative modeling, support vector machines, semi-definite programming, graph-based methods, etc. (see the overview in \cite{zhu2005semi,chapelle2006semi}). Our focus is on  graph-based approaches that combine label information with geometric information extracted from the unlabeled data employing graphical unsupervised techniques \citep{von2007tutorial}.  This heuristic has motivated the use of graph-based regularizations in a wide number of applications, but a rigorous analysis of the mechanisms by which unlabeled data enhance the performance of graph-based SSL methods is only starting to emerge. The recent papers \cite{bertozzi2020posterior,hoffmann2020consistency} studied posterior \emph{consistency} (the asymptotics of the posterior probability $\Pi_n(f:d_n(f,f_0)\geq\varepsilon\,|\, \operatorname{data})$ for a fixed small number $\varepsilon$)  for a fixed sample size in the small noise limit, whereas we consider the large $n$ limit and further establish posterior contraction rates. Rates of convergence for optimization rather than Bayesian formulations of graph-based SSL have been recently established in  \cite{calder2020rates}.   In a Bayesian framework, \cite{garcia2018continuum,trillos2020consistency} show the continuum limit of posterior distributions as the size of the unlabeled dataset grows, without increasing the size of the labeled dataset. These papers did not investigate whether the posteriors contract around the truth, and they did not demonstrate the value of unlabeled data in boosting learning performance. The work \cite{kirichenko2017estimating} studies fully-supervised function estimation on large graphs assuming that the truth changes with the size of the graph. In contrast, we investigate posterior contraction in a SSL setting with a \emph{fixed} truth $f_0$ defined on the underlying manifold $\M$.

Our analysis uses tools from Bayesian nonparametrics and spectral analysis of graph-Laplacians. We will provide the necessary background on posterior contraction in Section \ref{sec:background pc}, and  we refer  to \cite{ghosal2017fundamentals}[Chapters 6, 8 and 11] for a comprehensive introduction to this subject. While numerous results on spectral convergence of graph-Laplacians can be found in the literature, our analysis of posterior contraction requires bounds in $L^\infty$-metric with rates, recently developed in \cite{dunson2019spectral,calder2020lipschitz}.

%A series of theoretical analysis \cite{liang2007use,wasserman2008statistical,niyogi2013manifold,singh2008unlabeled} have been carried out on whether unlabeled data improves the learning performance, where both positive and negative conclusions have been made. The analysis mostly sits on one of the two assumptions on the dependence of the target function on the marginal data distribution: the \emph{clustering assumption}, which says that the target function is smooth over high density regions, and the \emph{manifold assumption}, which assumes a low dimensional manifold structure for the features. References for manifold assumption \cite{castillo2014thomas,yang2016bayesian,bickel2007local,wasserman2008statistical,trillos2019local}.

\subsection{Main Contributions and Scope}\label{sec:scope}

Our main result, Theorem \ref{thm:Main Thm}, is to our knowledge the first to establish posterior contraction rates for graph-based SSL. In doing so, we provide novel understanding on the relative value of labeled and unlabeled data, and we set forth a rigorous quantitative framework in which to analyze this question. We point out, however, two caveats. First, our theory is non-adaptive in the sense that the Bayesian methodology we analyze only achieves optimal contraction rates when a priori information on the smoothness of the truth is available. Our analysis is motivated by existing graph-based techniques, and the development and analysis of adaptive graph-based methods for SSL is an important research direction stemming from our work, but beyond the scope of this paper. Second, our results build on existing spectral convergence rates of graph-Laplacians that may be suboptimal; as a consequence,  the required sample size of the unlabeled data  that we establish may not be sharp. Due to the plug-in nature of our analysis, improved spectral convergence rates will automatically translate into a sharper bound on the sample complexity. Despite these caveats, our theory provides evidence that letting the unlabeled dataset  grow polynomially with the labeled dataset, as suggested by Theorem \ref{thm:Main Thm}, is a fundamental requirement for standard graph-based methods to achieve optimal contraction. 

A central part of our proof is devoted to analyzing the convergence of a discretely indexed Gauss-Markov random field in an unstructured data cloud to a Mat\'ern-type Gaussian field on $\M$. This is formalized in Theorem \ref{thm:L infty conv}, a result that we believe to be of independent interest. Finally, our work contributes to the Bayesian nonparametrics literature on manifolds \citep{castillo2014thomas}.

\paragraph{Outline}
The rest of this paper is organized as follows. Section \ref{sec:main result} introduces the construction of the graph-based prior and states our main result. Section \ref{sec:background pc} provides the necessary background on posterior contraction and outlines our analysis. 
Section \ref{sec:analysis} proves our main result, and we close in Section \ref{sec:discussion}.

\paragraph{Notation} We denote by $\mathcal{L}(Z)$ the law of the random variable $Z$. For $a,b$ two real numbers, we denote $a\wedge b=$ min$\{a,b\}$ and $a\vee b=$ max$\{a,b\}$. The symbol $\lesssim$ will denote less than or equal to up to a universal constant. For two real sequences $\{a_n\}$ and $\{b_n\}$, we denote (i) $a_n\ll b_n$ if $\operatorname{lim}_n (b_n/a_n)=0$; (ii) $a_n=O(b_n)$ if $\operatorname{lim\, sup}_n (b_n/a_n)\leq C$ for some positive constant $C$; and  (iii) $a_n\asymp b_n$ if  $c_1\leq \operatorname{lim\,inf}_n (a_n/b_n) \leq \operatorname{lim\,sup}_n (a_n/b_n) \leq c_2$ for some positive constants $c_1,c_2$.  Finally we let $\gamma$ denote the Lebesgue measure on $\mathbb{R}$.

\section{Prior Construction and Main Result}\label{sec:main result}
In this section we introduce the graph-based prior $\prior (\cdot \, |\, \X_{N_n})$ and we state the main result of this paper. Before doing so, we formalize our setting. 
We assume to be given labeled data $(X_1,Y_1),\ldots,(X_n,Y_n)\overset{i.i.d.}\sim \L(X,Y)$ and unlabeled data $X_{n+1},\ldots,X_{N_n} \overset{i.i.d.}\sim \L(X) $, where $\{X_i\}_{i=n+1}^{N_n}$ are independent from $\{X_i\}_{i=1}^n$. 
Recall that the goal is to estimate the regression function
 \begin{align*}
    f_0(x) =\mathbb{E}(Y|X=x)
\end{align*}
at the given features $\X_{N_n} = \{X_1, \ldots, X_{N_n} \}.$ 
We suppose that $\mu : = \L(X)$ is supported on an $m$-dimensional smooth, connected, compact manifold $\mathcal{M}$ without boundary embedded in $\mathbb{R}^d$, with the absolute value of the sectional curvature bounded and with Riemannian metric inherited from the embedding.  For technical reasons, we further assume that $m\geq 2$ and that $\M$ is a homogeneous manifold (the group of isometries acts transitively on $\M$, e.g. spheres and tori) normalized so that its volume is $1$. We assume for simplicity that $\mu$ is the uniform distribution on $\M.$  As discussed in Section \ref{sec:discussion},  our results can be generalized  to nonuniform marginal density.

\subsection{Graph-based Prior}\label{sec:graph prior}
We now describe the construction of the graph-based prior  $\prior (\cdot \, |\, \X_{N_n} )$ on $f_0$ restricted to the given features $\X_{N_n}.$ The priors we consider have the general form  
\begin{align}\label{eq:generalprior}
    \prior (\cdot \, |\, \X_{N_n})=\mathcal{L}(\Phi(\wn )\,|\, \X_{N_n} ),
\end{align}
where $\wn$ is a Gauss-Markov random field in $\mathbb{R}^{N_n}$ whose covariance will be defined in terms of  a graph-Laplacian \citep{von2007tutorial} of $\X_{N_n}.$  For the regression problem, $\Phi$ is taken to be the identity. For the classification problem, where $f_0$ takes values in $(0,1),$  $\Phi:\mathbb{R}\rightarrow (0,1)$ is a link function, which we assume throughout to be invertible and twice differentiable with $\Phi^{\prime}/(\Phi(1-\Phi))$ uniformly bounded and $\int (\Phi^{\prime\prime})^2/\Phi^{\prime}\, d\gamma <\infty$. The logistic function, for instance, satisfies all these standard requirements.

In the remainder of this subsection we introduce and motivate our construction of the Gauss-Markov random field $\wn.$ The starting point is to define a similarity matrix $H\in\mathbb{R}^{N_n\times N_n}$ whose entries $H_{ij} \ge 0$ represent the closeness between features $X_i$ and $X_j$. For reasons discussed in Subsection \ref{ssec:interpretationmatern} below, we set
\begin{align}
    H_{ij} :=\frac{2(m+2)}{N_n\nu_m\zeta_{N_n}^{m+2}}\mathbf{1} \bigl\{|X_i-X_j|<\zeta_{N_n} \bigr\}, \label{eq:similarity}
\end{align}
where $|\cdot|$ is the Euclidean distance in $\mathbb{R}^d$, $\nu_m$ is the volume of the $m-$dimensional unit ball, and $\zeta_{N_n}$ is the {\em connectivity} of the graph, a user-specified parameter. Recall that a graph-Laplacian is a positive semi-definite matrix obtained by suitably transforming a similarity matrix. For concreteness, we work with the unnormalized graph-Laplacian matrix $\Delta_{N_n}:=D-H$, where $D$ is the diagonal matrix with entries $D_{ii}=\sum_{j=1}^{N_n}H_{ij}$. For a vector $v\in\mathbb{R}^{N_n}$,  naturally identified with a function on the data cloud $\X_{N_n}$, we have 
\begin{align}
    v^T\Delta_{N_n}v=\frac12\sum_{i=1}^{N_n}H_{ij}|v(X_i)-v(X_j)|^2. \label{eq:GL}
\end{align}
We see that indeed $\Delta_{N_n}$ is positive semi-definite and that functions $v$ that change slowly with respect to the similarity $H$ have a small value of $v^T\Delta_{N_n}v.$ This naturally suggests considering the Gaussian distribution $\mathcal{N}(0,(I_{N_n}+\Delta_{N_n})^{-s})$ since the likelihood for the samples will then have a term similar to \eqref{eq:GL}. Based on this idea we shall define the Gauss-Markov random field  $\wn$ as the principal components of it. Precisely, let $\{(\lambda_i^{(N_n)},\psi_i^{(N_n)})\}_{i=1}^{N_n}$ be the ordered eigenpairs of $\Delta_{N_n}$ and define 
\begin{align}
    \wn =\sum_{i=1}^{k_{N_n}} \left[1+\lambda_i^{(N_n)}\right]^{-\frac{s}{2}} \xi_i \psi_i^{(N_n)}, \quad \quad \xi_i\overset{i.i.d.}{\sim} \mathcal{N}(0,1), \label{eq:KLtrun}
\end{align}
where $k_{N_n}\ll N_n$ is to be determined. 
Notice that $W_n$ as in \eqref{eq:KLtrun} is a truncation of the Karhunen-Lo\`eve expansion of samples from  $\mathcal{N}(0,(I_{N_n}+\Delta_{N_n})^{-s})$ and therefore the terminology ``principal components''.  

The law of $\wn$ depends  on three parameters: the graph {\em connectivity} $\zeta_{N_n}$ used to define the similarity matrix $H,$ the {\em smoothness} parameter $s,$ and the principal components {\em truncation} parameter $k_{N_n}.$ The connectivity $\zeta_{N_n}$ determines the sparsity of the precision matrix of $W_n$, and it  should be taken to decrease with $N_n$ to better resolve the geometry of $\M$ as more unlabeled data are available. The smoothness $s$ controls the level of regularization.  Larger $s$ leads to  faster decay of the coefficients in \eqref{eq:KLtrun} and smoother samples. The truncation parameter $k_{N_n} \ll N_n$ allows us to keep only the components of the graph-Laplacian that contain useful geometric information on $\M.$ Suitable choices and scalings of these three parameters, as well as further insights on their interpretation, will be given  as we develop our theory. Importantly, the construction does not require knowledge of the underlying manifold $\M$, but only of its dimension. These data-driven Gauss-Markov fields have been used within various intrinsic approaches to Bayesian learning, see e.g. \cite{garcia2018continuum,trillos2019local,trillos2020consistency,harlim2020kernel}.

%We remark that there are two sources of randomness in our definition \eqref{eq:KLtrun}, coming from both $\xi_i$'s and $X_i$'s. It is therefore natural to think of our graph-based prior as defined conditioning on the $X_i$'s. In other words, $\prior$ should be interpreted as 
%\begin{align*}
%    \prior (\cdot \, |\, \X_N)=\mathcal{L}(\Phi(w_{n})\,|\, \X_N).
%\end{align*}
%The corresponding graph-based posterior  \eqref{eq:discrete posterior} should also take into account the randomness of $\{X_i\}_{i=1}^{N_n}$.

\subsubsection{Interpretation as a Discretely Indexed Mat\'ern Field}\label{ssec:interpretationmatern}
The Gauss-Markov field $\wn$ can be interpreted as a discretely indexed Mat\'ern Gaussian field  \citep{sanz2020spde}. Consider the Gaussian measure $\Nc(0,  (I-\Delta)^{-s}) ),$ where $-\Delta$ denotes the Laplace-Beltrami operator on $\M$ and the fractional order operator is defined spectrally. Then,  draws from $\Nc(0,  (I-\Delta)^{-s}) )$   admit a representation 
\begin{align}
    W^{\M}=\sum_{i=1}^{\infty}(1+\lambda_i)^{-\frac{s}{2}}\xi_i \psi_i,\quad \quad \xi_i\overset{i.i.d.}{\sim} \mathcal{N}(0,1),\label{eq:continuumprior}
\end{align}
where $(\lambda_i,\psi_i)$'s are the ordered eigenpairs of the Laplace-Beltrami operator. Note the analogy with \eqref{eq:KLtrun}. The field $W^{\M}$ is a generalization of the Mat\'ern model to compact manifolds  \citep{lindgren2011explicit,sanz2020spde}. 
An important step in our analysis of posterior contraction will be to show the convergence of $\wn$ towards $W^{\M},$ in a sense to be made precise in Subsection \ref{sec:L infty conv}, provided that the connectivity, smoothness and truncation parameters of $\wn$ are suitably chosen.
 The similarity matrix defined in \eqref{eq:similarity} enables this convergence result, but other choices (e.g. $K$-nearest neighbors) are possible. Key to showing convergence of $\wn$ to $W^{\M}$  is the spectral convergence of $\Delta_{N_n}$ to $-\Delta$. The truncation parameter $k_{N_n}$ is motivated by the fact that only the first eigenpairs of $\Delta_{N_n}$ accurately approximate those of $-\Delta;$ see Proposition  \ref{thm:specRate} below for a precise statement.

% see in Section \ref{sec:background pc} that the continuum field $\pi$ plays a crucial role in our analysis. 

%Such priors are closely related to Gaussian fields with Mat\'ern covariance function, which have been widely used to model random functions in Euclidean spaces due to their flexibility. 

%An important characterization by Whittle \cite{whittle1954stationary, whittle1963stochastic} is that a Gaussian Mat\'ern field on $\mathbb{R}^m$ with regularity parameter $\alpha$ and length scale parameter $\ell$ is the statistically stationary solution to the following stochastic partial differential equation: 
%\begin{align*}
%    (\ell^{-2}I- \Delta)^{\alpha+\frac{m}{2}} u(x) =\mathcal{W}(x),  \quad \quad x \in \R^m,
%\end{align*}
%where $\mathcal{W}$ is a Gaussian white noise on $\mathbb{R}^m$. Therefore in terms of covariance operator view, $\pi$ can be interpreted as the analog of Mat\'ern fields on manifold with regularity parameter $s-\frac{m}{2}$. 

%As a result, $\wn$ can be interpreted as a Gauss Mat\'ern field on graphs, which has been studied in detail in \cite{sanz2020spde}. 

\subsubsection{Reconciling Optimization and Bayesian Perspectives}
To further motivate the definition of $\wn$, we relate our prior construction to the optimization literature. To streamline the discussion, let us focus on the regression problem where the link function $\Phi$ in \eqref{eq:generalprior} is the identity map. Classical graph-based optimization recovers $f_0$ at $\X_{N_n}$ using $f^T\Delta_{N_n}f$ as a regularizer and an appropriate data-fidelity term to match the labeled data,  for instance,
\begin{align}
    \hat{f}_0:=\underset{f\in\mathbb{R}^{N_n}}{\operatorname{arg\,min}}\, \sum_{i=1}^n |Y_i-f(X_i)|^2 +\lambda f^T  \Delta_{N_n}f, \label{eq:opt}
\end{align}
where $\lambda$ controls the level of regularization. The solution to \eqref{eq:opt} is conceptually equivalent to the \emph{maximum a posteriori} estimator in the Bayesian approach when the prior is chosen to be $\mathcal{N}(0, \Delta_{N_n}^{-1})$, where $\Delta_{N_n}^{-1}$ denotes the pseudo-inverse of $\Delta_{N_n}.$ 
The paper \cite{nadler2009semi}  shows that \eqref{eq:opt} is not well-posed  when $m\geq 2,$ in the sense that as the number of unlabeled data points increases the solution degenerates to a noninformative function. The authors suggest that this issue can be alleviated by defining the regularization term as $\lambda f^T\Delta_{N_n}^s f $ with $s > \frac{m}{2},$ so that higher order ``derivatives'' of $f$ are controlled.  
Similar behavior has been observed with $p$-Laplacian regularizations \citep{el2016asymptotic}. The parameter $s$ in \eqref{eq:KLtrun} plays the exact same role, and we shall see that suitably scaling $s$ with $m$ is needed to warrant consistent learning in the limit of large unlabeled datasets.

\subsection{Main Result}
Now we are ready to state our main result. Let $\delta>0$ be arbitrary. Let $p_m = \frac34$ if $m=2$ and $p_m=\frac{1}{m}$ otherwise. Let $\alpha_s=\frac{6m+6}{2s-3m+1}$ if $s\leq \frac{9}{2}m+\frac52$ and $\alpha_s=1$ otherwise. 
\begin{theorem}\label{thm:Main Thm}
Suppose $\Phi^{-1}(f_0)\in B_{\infty,\infty}^{\beta}$. Let $\prior$ be the prior defined by \eqref{eq:generalprior} and \eqref{eq:KLtrun} with $s>\frac{3}{2}m-\frac12$. Consider the following scaling for $\zeta_{N_n},k_{N_n}$ and $N_n$. 
\begin{enumerate}
    \item $m\leq 4$: $\zeta_{N_n}\asymp N_n^{-\frac{1}{m+4+\delta}}(\log N_n)^{\frac{p_m}{2}},k_{N_n}\asymp N_n^{\frac{m}{(m+4+\delta)(2s-3m+1)\alpha_s}}(\log N_n)^{-\frac{mp_m}{(4s-6m+2)\alpha_s}}$ with
    \begin{align}
        N_n\asymp n^{(m+4+\delta)\alpha_s} (\log n)^{\frac{(m+4+\delta)p_m}{2}}. \label{eq:scaling N_n 1}
    \end{align}
    \item $m\geq 5$: $\zeta_{N_n}\asymp N_n^{-\frac{1}{2m}}(\log N)^{\frac{p_m}{2}},k_{N_n}\asymp N_n^{\frac{1}{(4s-6m+2)\alpha_s}}(\log N_n)^{-\frac{mp_m}{(4s-6m+2)\alpha_s}}$ with
    \begin{align}
        N_n\asymp n^{2m\alpha_s}(\log n)^{mp_m}. \label{eq:scaling N_n 2}
    \end{align}
\end{enumerate}
Then, for $\eps_n$ a multiple of $n^{-\frac{(s-\frac{m}{2})\wedge \beta}{2s}}(\log n)^{\frac{(s-\frac{m}{2})\wedge \beta}{4s-2m}}$ and all $M_n\rightarrow \infty$,
\begin{align}
    \mathbb{E}_X\mathbb{E}_{f_0} \prior\left(f\in L^{\infty}(\mu_{N_n}): \|f-f_0\|_n\geq M_n\eps_n\,|\,  \X_{N_n},\Y_n \right)\xrightarrow{n\rightarrow \infty}0, \label{eq:pc}
\end{align}
where $\mu_{N_n}$ is the empirical measure of $\X_{N_n}$ and  $\|f-f_0\|^2_n:=\frac{1}{n}\sum_{i=1}^n |f(X_i)-f_0(X_i)|^2$. Here $\mathbb{E}_{f_0}$ denotes expectation with respect to the conditional distribution of $\mathcal{Y}_n|X_1,\ldots,X_n$ determined by $f_0$ and $\mathbb{E}_X$ denotes expectation with respect to the marginal distribution of $\mathcal{X}_{N_n}$.  
\end{theorem}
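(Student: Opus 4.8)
The plan is to verify the prior--mass and testing conditions of Ghosal, Ghosh and van der Vaart --- as recalled in Section~\ref{sec:background pc} --- for the \emph{data-dependent} prior $\prior(\cdot\,|\,\X_{N_n})$, by transferring them from the continuum Mat\'ern prior $\Pim:=\mathcal{L}(\Phi(W^\M))$ on functions over $\M$, where $W^\M$ is the field in \eqref{eq:continuumprior}. The whole argument is run on an event $A_n$ in the marginal of $\X_{N_n}$ on which the spectral approximation bounds of Proposition~\ref{thm:specRate} and the $L^\infty$ convergence of Theorem~\ref{thm:L infty conv} hold; on $A_n^c$ the posterior probability in \eqref{eq:pc} is bounded trivially by $1$, and since $\Prob_X(A_n^c)\to 0$ this part vanishes in the limit.

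\textbf{Step 1: contraction under the continuum prior.} First I would show that the posterior built from $\Pim$ and the likelihoods \eqref{eq:discrete likelihood 1}--\eqref{eq:discrete likelihood 2} contracts at rate $\eps_n$ with respect to $\|\cdot\|_n$. Since $\Phi$ is a fixed smooth link and $W^\M$ is Gaussian, this reduces to estimating the concentration function $\phi_{f_0}(\eps)=\inf\{\tfrac12\|h\|_{\mathbb H}^2:\|h-\Phi^{-1}(f_0)\|_{L^\infty(\M)}\le\eps\}-\log\Prob(\|W^\M\|_{L^\infty(\M)}<\eps)$, where $\mathbb H=H^s(\M)$ is the reproducing kernel Hilbert space of $W^\M$. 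Weyl's law $\lambda_i\asymp i^{2/m}$ for the Laplace--Beltrami eigenvalues makes the coefficients in \eqref{eq:continuumprior} decay like $i^{-s/m}$, giving a small--ball exponent $\eps^{-2m/(2s-m)}$ up to a logarithmic factor, while $\Phi^{-1}(f_0)\in B_{\infty,\infty}^\beta$ with Besov--Sobolev approximation on $\M$ gives an approximation exponent $\eps^{-2(s-\beta)_+/\beta}$. Solving $\phi_{f_0}(\eps_n)\lesssim n\eps_n^2$ returns precisely $\eps_n\asymp n^{-\frac{(s-m/2)\wedge\beta}{2s}}$ up to the stated power of $\log n$, and the same estimates produce sieves $\mathcal F_n$ (a suitable $\mathbb H$-ball Minkowski--enlarged by an $L^\infty$ ball of radius $\eps_n$) with $\log N(\eps_n,\mathcal F_n,\|\cdot\|_{L^\infty(\M)})\lesssim n\eps_n^2$ and $\Pim(\mathcal F_n^c)\le e^{-Cn\eps_n^2}$. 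To run these through the GGV machinery I would, for regression, use that Gaussian location families turn $\|f-g\|_n$ directly into Kullback--Leibler and Hellinger control; for classification, the assumptions that $\Phi'/(\Phi(1-\Phi))$ is bounded and $\int(\Phi'')^2/\Phi'\,d\gamma<\infty$ are exactly what lets one bound the relevant Kullback--Leibler divergence and its variance above by, and the squared Hellinger distance below by, multiples of $\|f-f_0\|_n^2$ on the relevant sets, so that the prior--mass neighbourhood may be taken to be $\{\|f-f_0\|_n\le\eps_n\}$.

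\textbf{Step 2: transfer to the graph-based prior.} By Theorem~\ref{thm:L infty conv}, on $A_n$ one can couple the field $\wn$ of \eqref{eq:KLtrun} with the restriction of $W^\M$ to $\X_{N_n}$ so that $\max_{1\le i\le N_n}|\wn(X_i)-W^\M(X_i)|\le r_{N_n}$, where the $L^\infty$ rate $r_{N_n}$ depends on $\zeta_{N_n}$, $k_{N_n}$ and $N_n$, and the scalings \eqref{eq:scaling N_n 1}--\eqref{eq:scaling N_n 2} are chosen exactly so that $r_{N_n}\ll\eps_n$. Since $\|\cdot\|_n\le\|\cdot\|_{L^\infty(\mu_{N_n})}$, since $\Phi$ is Lipschitz, and since on the cloud $f_0$ is \emph{exactly} the restriction of the manifold truth (no discretization bias in the target), this coupling transports the three conditions of Step 1 to the discrete prior: (i) $\prior(f:\|f-f_0\|_n\le\eps_n\,|\,\X_{N_n})\ge\Pim(\|\Phi(W^\M)-f_0\|_{L^\infty(\M)}\le\tfrac12\eps_n)\ge e^{-cn\eps_n^2}$ once $r_{N_n}\le\tfrac12\eps_n$; (ii) the restricted, $r_{N_n}$-enlarged sieves $\widetilde{\mathcal F}_n$ satisfy $\log N(\eps_n,\widetilde{\mathcal F}_n,\|\cdot\|_n)\lesssim n\eps_n^2$, because restriction followed by $\|\cdot\|_n$ is dominated by $\|\cdot\|_{L^\infty(\M)}$; and (iii) $\prior(\widetilde{\mathcal F}_n^c\,|\,\X_{N_n})\le e^{-Cn\eps_n^2}$. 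Working with $L^\infty$ entropy of a fixed sieve here also sidesteps the randomness of the semi--metric $\|\cdot\|_n$. The GGV theorem then yields \eqref{eq:pc} on $A_n$, which together with the first paragraph completes the proof.

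\textbf{Main obstacle.} I expect the delicate part to be making the transfer in Step 2 quantitative: not only must $r_{N_n}$ beat the polynomially small $\eps_n$, but the three error sources packaged inside Theorem~\ref{thm:L infty conv} must be balanced against $\eps_n$ simultaneously --- the eigenvalue and eigenfunction approximation error of $\Delta_{N_n}$ towards $-\Delta$ (controlled by $\zeta_{N_n}$ and $N_n$ through Proposition~\ref{thm:specRate}), the bias from truncating the Karhunen--Lo\`eve expansion at $k_{N_n}$ (controlled by the decay $\lambda_i^{-s/2}$ against the sup--norm growth of the eigenfunctions, whose summability is precisely the hypothesis $s>\tfrac32 m-\tfrac12$), and the passage from the continuum $L^\infty$ norm to the empirical norm on the data cloud. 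Carrying out this optimization --- which in the currently available spectral bounds splits into the regimes $m\le 4$ and $m\ge 5$ according to the admissible connectivity scaling --- is what pins down the exponents in \eqref{eq:scaling N_n 1}--\eqref{eq:scaling N_n 2}, and propagating the whole estimate through both the regression and classification likelihoods is the technical heart of the argument.
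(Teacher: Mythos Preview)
Your overall plan matches the paper's proof: the concentration--function analysis of the continuum Mat\'ern field $W^\M$ (your Step~1 is exactly Theorem~\ref{thm:cont rate}), followed by a transfer to the graph prior via the $L^\infty$ convergence of Theorem~\ref{thm:L infty conv}, all run on a high--probability event $A_n$ in the $X$--marginal with the complement handled trivially. The handling of the two likelihoods is also the same.

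Two technical points, however, are not quite as you describe and are worth correcting. First, the paper does not restrict $W^\M$ to the cloud and work in $L^\infty(\mu_{N_n})$; it goes the other way, \emph{interpolating} the graph field to the manifold via a transport map $T_{N_n}$ (Subsection~\ref{sec:reduction}, Lemma~\ref{lemma:commutative}), so that the sequence of priors $\pim=\mathcal{L}(\Wn\,|\,\X_{N_n})$ all live on the fixed Banach space $L^\infty(\mu)$ and the discrete posterior is dominated by the interpolated one. This avoids the ambient--space--changing issue that your restriction approach leaves open. Second, the transfer mechanism is not an almost--sure coupling bound $r_{N_n}\ll\eps_n$: Theorem~\ref{thm:L infty conv} only delivers the \emph{expectation} $\mathbb{E}_\xi\|\Wn-W^\M\|_{L^\infty(\mu)}^2$, and the scalings \eqref{eq:scaling N_n 1}--\eqref{eq:scaling N_n 2} are calibrated so that this quantity is $\lesssim n^{-1}$ (Corollary~\ref{cor:L infty conv}), which is precisely the hypothesis of \cite{van2008rates}[Theorem~2.2] for transferring the three conditions from a fixed Gaussian to an approximating sequence of Gaussians. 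Your direct enlargement argument would require a $\xi$--almost--sure bound that is not established, and your weaker target $r_{N_n}\ll\eps_n$ (recall $\eps_n\gg n^{-1/2}$) would not explain the stated exponents for $N_n$, which come from matching the error with $n^{-1}$.
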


Theorem \ref{thm:Main Thm} presents the posterior contraction rates with respect to the priors constructed in Section \ref{sec:graph prior} under suitable choices of the parameters. Several remarks are in order.

%The truth $f_0$ is assumed to belong to a Besov-type space on manifolds whose definition can be found in Subsection \ref{sec:continuum}. It is meant to extend the usual Besov space $B_{\infty,\infty}^{\beta}(\mathbb{R}^m)$ on Euclidean spaces, which coincides with the  H\"older space $\mathcal{C}^{\beta}(\mathbb{R}^m)$ (the space of functions with $\lfloor \beta\rfloor$ continuous derivatives whose $\lfloor \beta\rfloor$-th derivative is $\beta-\lfloor \beta\rfloor$-H\"older continuous) when $\beta\notin \mathbb{N}$ and contains $\mathcal{C}^{\beta}(\mathbb{R}^m)$ when $\beta\in\mathbb{N}$ (see e.g. \cite{triebel1992theory}[Theorem 1.3.4]). Therefore  $B_{\infty,\infty}^{\beta}$ can be interpreted as the analog of H\"older space with parameter $\beta$ on manifolds and in particular represents a space of $\beta$-regular functions. 
%\nc

The truth $f_0$ is assumed to belong to a Besov-type space $B_{\infty,\infty}^{\beta}$ on manifolds, defined in Subsection \ref{sec:continuum} in analogy to the usual Besov space $B_{\infty,\infty}^{\beta}(\mathbb{R}^m)$ on Euclidean space. We recall that $B_{\infty,\infty}^{\beta}(\mathbb{R}^m)$ coincides with the  H\"older space $\mathcal{C}^{\beta}(\mathbb{R}^m)$ (the space of functions with $\lfloor \beta\rfloor$ continuous derivatives whose $\lfloor \beta\rfloor$-th derivative is $\beta-\lfloor \beta\rfloor$-H\"older continuous) when $\beta\notin \mathbb{N}$ and contains $\mathcal{C}^{\beta}(\mathbb{R}^m)$ when $\beta\in\mathbb{N}$ (see e.g. \cite{triebel1992theory}[Theorem 1.3.4]). Therefore,  $B_{\infty,\infty}^{\beta}$ can be interpreted as an analog of the H\"older space with parameter $\beta$ on manifolds, and in particular represents a space of $\beta$-regular functions. 

The key implication of Theorem \ref{thm:Main Thm} is that when $s-\frac{m}{2}=\beta$, we recover the rate $n^{-\beta/(2\beta+m)}(\log n)^{1/4}$, which is minimax optimal up to a logarithmic factor for $\beta$-regular functions. The assumption $s>\frac{3}{2}m-\frac{1}{2}$ then requires $\beta>m-\frac{1}{2}$, so optimal contraction rates can only be attained if $f_0$ is not too rough. Theorem \ref{thm:Main Thm} can be extended to hold for all $s>m$ if the eigenfunctions of the Laplace-Beltrami operator on $\M$ are uniformly bounded, which holds for the family of flat manifolds \citep{toth2002riemannian} that include for instance the tori. As mentioned in Subsection \ref{sec:scope}, the above choice of $s$  requires knowing the regularity of $f_0$ and is not adaptive. 

Another key ingredient of the result is the scaling for $N_n$ as in \eqref{eq:scaling N_n 1} and \eqref{eq:scaling N_n 2}, which are both larger than a multiple of $n^{2m}$ since $\alpha_s\geq 1$ in all cases. In other words, the required sample size of the unlabeled data should grow polynomially with respect to the sample size of the  labeled  data  in order to achieve the near optimal contraction rates described above, thereby justifying the claim that unlabeled data help.  

We further remark that Theorem \ref{thm:Main Thm} only gives an upper bound for the required sample size $N_n$, whose proof (see Section \ref{sec:analysis}) in fact has a plug-in nature. Suppose the sequence of Gaussian-Markov fields $\wn$ in \eqref{eq:KLtrun} converges in some semimetric $d_n$ towards $W^{\M}$ in \eqref{eq:continuumprior} with rate $\mathbb{E}_{\xi} d_n(\wn,W^{\M})=R(N_n)$ for some function $R$. The required sample size is then obtained by matching $R(N_n)$ with $n^{-1}$. In particular, any improvement of the rate $R(N_n)$ shown in Subsection \ref{sec:L infty conv} will lead to a reduction of the required sample size. However, the spectral convergence rate in Proposition \ref{thm:specRate} and hence $R(N_n)$ should not be faster than the resolution of the point cloud, which is shown to scale like $N_n^{-1/m}$ in Proposition \ref{prop:transmap}. Therefore by matching $N_n^{-1/m}$ with $n^{-1}$ we expect a polynomial dependence of $N_n$ on $n$ to be necessary to achieve optimal contraction rate, further demonstrating the need of unlabeled data in our specific setting.

\section{Posterior Contraction: Background and Set-up}\label{sec:background pc}
In this section we present necessary background on posterior contraction rates.  In Subsection \ref{ssec:generalprinciples} we review the main results on posterior contraction that our theory builds on, and in Subsection \ref{ssec:oursetting}  we explain how these general results are used in our proof. 

It will be important to note that our prior is constructed with the $X_i$'s, and we shall view our observations as the $Y_i$'s only, as the notation in \eqref{eq:discrete posterior} suggests. In particular, the $Y_i$'s are independent but non-identically distributed (i.n.i.d.) and hence we will apply general results from \cite{ghosal2007convergence}. This also explains the double expectation in \eqref{eq:pc}, where the randomness of the $X_i$'s is treated separately. 

\subsection{General Principles}\label{ssec:generalprinciples}
Here we review general posterior contraction theory for the problem of estimating $f_0$ at the continuum level from i.n.i.d data, following \cite{ghosal2007convergence}. 
Suppose the data $\{Y_i\}_{i=1}^n$ are generated according to a density $P_{f_0}^{(n)}=\prod_{i=1}^n p_{f_0,i}$ for some ground truth parameter $f_0$, where $p_{f_0,i}$ is the individual density for each observation. Let $\Pi_n$ be a sequence of priors over $f_0$ that is supported on some parameter space $\mathcal{F}$ equipped with a semimetric $d_n$. Theorem 4 of \cite{ghosal2007convergence} states that 
\begin{align}
    \mathbb{E}_{f_0} \Pi_n\left(f\in\mathcal{F}: d_n(f,f_0)\geq M_n\eps_n\,|\, \Y_n\right)\xrightarrow{n\rightarrow \infty} 0, \label{eq:pc general}
\end{align}
provided that there exists sets $\mathcal{F}_n\subset{\mathcal{F}}$ and positive real numbers $\eps_n, K$ so that 
\begin{align}
    \log N(\eps_n, \mathcal{F}_n, d_n) &\leq n\eps_n^2, \label{eq:entropy}\\
    \Pi_n(\mathcal{F}_n^c) &\leq  e^{-(K+4)n\eps_n^2},\label{eq:complement}\\
    \Pi_n(B_n^*(f_0, \eps_n))&\geq e^{-Kn\eps_n^2}, \label{eq:prior mass} 
\end{align}
where $N(\eps_n,\mathcal{F}_n,d_n)$ is the minimum number of $d_n$-balls of radius $\eps_n$ needed to cover $\mathcal{F}_n$. Here
\begin{align}
    B_n^*(f_0,\eps_n):=\left\{f\in\mathcal{F}: \frac{1}{n}\sum_{i=1}^n \dkl(p_{f,i},p_{f_0,i})\leq \eps_n^2, \quad \frac{1}{n}\sum_{i=1}^n v_{\mbox {\tiny{\rm KL}}}(p_{f,i},p_{f_0,i})\leq C\eps_n^2\right\},\label{eq:KL neighbor}
\end{align}
where $C$ is a universal constant, $\dkl(p,q)=\int p \log (p/q)d\gamma$ is the Kullback-Leibler divergence and $v_{\mbox {\tiny{\rm KL}}}(p,q)=\int p|\log(p/q)-\dkl(p,q)|^2d\gamma$.  Conditions \eqref{eq:complement} and \eqref{eq:entropy} roughly state that there are approximating sieves $\mathcal{F}_n$ which capture most of the prior probability while not being too large.
 Condition \eqref{eq:prior mass} requires sufficient prior mass near the truth $f_0$ and together with \eqref{eq:entropy} further indicates that the priors should be ``uniformly spread''. 
  In fact the three conditions are stronger than those in \cite{ghosal2007convergence}[Theorem 4] but will suffice in our case for Gaussian process priors to be discussed shortly below. We shall refer to \cite{ghosal2000convergence}[Section 2] and \cite{ghosal2007convergence}[Section 2] for further discussion on the interpretation and relaxation of these conditions.

The general theory can be used to analyze a wide range of statistical models but does not give a recipe for constructing the sieves $\mathcal{F}_n$ and $\eps_n$. However, when the priors are Gaussian there exists a simple relation between $\eps_n$ and the priors. Suppose in addition that $\Pi_n=\mathcal{L}(w_n)$ are Gaussian measures on some Banach space $(\mathbb{B},\|\cdot\|_{\mathbb{B}})$ that converge to a fixed Gaussian measure $\Pi=\mathcal{L}(w)$ on the same Banach space with $10\mathbb{E}\|w_n-w\|^2_{\mathbb{B}}\leq n^{-1}$. Theorem 2.2 in \cite{van2008rates} then states that the  contraction rate $\varepsilon_n$ can be obtained by studying the \emph{concentration function} of the limit prior $\Pi$, defined as 
\begin{align}\label{eq:concenFunc}
\phi_{f_0}(\eps) := \underset{h\in\mathbb{H}:\|h-f_0\|_{\mathbb{B}}<\eps}{\operatorname{inf}}\, \|h\|^2_{\mathbb{H}} - \log \mathbb{P} (\|w\|_{\mathbb{B}}<\eps),
\end{align}
where $(\mathbb{H},\|\cdot\|_{\mathbb{H}})$ is the  \emph{reproducing kernel Hilbert space} (RKHS) of $\Pi$ (see e.g. \cite{van2008reproducing} for more details on Gaussian measures and the associated RKHSs). More precisely, if $\eps_n$ satisfies $\phi_{f_0}(\eps_n)\leq n\eps_n^2$, then for the same $\eps_n$ and some $\mathcal{F}_n$, the three conditions \eqref{eq:entropy}, \eqref{eq:complement} and \eqref{eq:prior mass} are satisfied (possibly for different proportion constants in front of $n\eps_n^2$) with $d_n$ and $B_n^*(f_0,\eps_n)$ replaced by $\|\cdot\|_{\mathbb{B}}$ and $\{f:\|f-f_0\|_{\mathbb{B}}<\eps_n\},$ respectively.

\subsection{Application to Our Setting} \label{ssec:oursetting}
In this subsection we discuss how we utilize the general theory outlined above, and provide a road map for the proof of Theorem \ref{thm:Main Thm}. Notice that in \eqref{eq:pc} the sequence of posteriors are supported on the discrete space $L^{\infty}(\mu_{N_n})$, whose size changes with $n$. To alleviate this issue we will reduce the analysis to a sequence of posteriors $\Pi_n^{\M}(\cdot\,|\, \X_{N_n},\Y_n)$ supported on the same continuum space $L^{\infty}(\mu)$ in Subsection \ref{sec:reduction}. The corresponding sequence of priors will turn out to have the form  $\Pi_n^{\M}( \cdot\, | \X_{N_n}) = \mathcal{L}(\Phi(\mathcal{I}W_n))$, with $W_n$ defined in \eqref{eq:KLtrun} and $\mathcal{I}$ a suitable interpolation map so that $\mathcal{I}W_n$ is a Gaussian process on $\M$ that approximates $W^{\M}$. Our analysis will then consist of the following steps. We will establish the three conditions \eqref{eq:entropy}, \eqref{eq:complement}, \eqref{eq:prior mass} for $W^{\M}$ in Subsection \ref{sec:continuum}, followed by a convergence rate analysis of $\mathcal{I}W_n$ towards $W^{\M}$ in Subsection \ref{sec:L infty conv}, so that the three conditions are inherited by $\mathcal{I}W_n$. The assumptions on the link function $\Phi$ will then allow us to conclude similar conditions for $\Phi(\mathcal{I}W_n)$. 

The discrepancy measure $d_n$ can be taken as the empirical $L^2$-norm $\|\cdot\|_n$ in the regression case (see e.g. \cite{ghosal2017fundamentals}[Section 8.3.2]) and the root average square Hellinger distance in the classification setting (\cite{ghosal2007convergence}[Section 3]), defined as 
\begin{align}
    d^2_{n,H}(f,f^{\prime})=\frac{1}{n}\sum_{i=1}^n \int \left(\sqrt{p_{f,i}}-\sqrt{p_{f^{\prime},i}}\right)^2 d\gamma. \label{eq:RASH}
\end{align}
In the latter case, one can show that $d_{n,H}$ is equivalent to $\|\cdot\|_n$. Indeed, since the densities are uniformly bounded, $\|f-f^{\prime}\|_n$ is bounded above by a multiple of \eqref{eq:RASH}. Furthermore \eqref{eq:RASH} is upper bounded by a multiple of $\|f-f^{\prime}\|_n$ by \cite{ghosal2017fundamentals}[Lemma 2.8(ii)] given our assumption on $\Phi$. Hence this explains the choice of $\|\cdot\|_n$ in \eqref{eq:pc}.

A natural choice for the Banach space in our setting is then  $\mathbb{B}={L^{\infty}(\mu)}$ since (i) $d_n=\|\cdot\|_{n}\leq \|\cdot\|_{L^{\infty}(\mu)};$ and (ii) $B_n^*(f_0,\eps_n) \supset \{f\in\mathcal{F}: \|f-f_0\|_{L^{\infty}(\mu)} \leq \tilde{C}\eps_n\}$ for some universal constant $\tilde{C}$. The first point is clear and an upper bound on the metric entropy \eqref{eq:entropy} in $\|\cdot\|_{L^{\infty}(\mu)}$ will automatically yield an upper bound in $\|\cdot\|_{n}$.  The second point follows from the fact that one can upper bound the two quantities in \eqref{eq:KL neighbor} by $\|f-f_0\|_n$ and hence $\|f-f_0\|_{L^{\infty}(\mu)}$ in both the regression and classification setting. Therefore the prior mass condition \eqref{eq:prior mass} for $W^{\M}$ in $L^{\infty}(\mu)$ balls is sufficient to give the corresponding condition for $B^{*}(f_0,\eps_n)$. This motivates us to consider the continuum Gaussian field defined in \eqref{eq:continuumprior} as an element in $L^{\infty}(\mu)$ in Subsection \ref{sec:continuum} and the $L^{\infty}(\mu)$ convergence rate in Subsection \ref{sec:L infty conv}.

\section{Proof of the Main Result} \label{sec:analysis}
In this section we prove Theorem \ref{thm:Main Thm}. An important part of the proof is to formalize the convergence of the Gauss-Markov random field $\wn$ in \eqref{eq:KLtrun}, defined in the data cloud $\X_{N_n},$ to the Mat\'ern field $W^{\M}$ in \eqref{eq:continuumprior}, defined in $\M$.  To that end, we introduce an interpolation of $\wn$ into $\M$
\begin{align}
    \Wn:=\sum_{i=1}^{k_{N_n}} \left[1+\lambda_i^{(N_n)}\right]^{-\frac{s}{2}} \xi_i \psi_i^{(N_n)}\circ T_{N_n}, \quad \quad \xi_i\overset{i.i.d.}{\sim} \mathcal{N}(0,1), \label{eq:sequenceprior}
\end{align}
where $T_{N_n}:\M\rightarrow \{X_1,\ldots,X_{N_n}\}$ are transport maps to be specified in Proposition \ref{prop:transmap} below. In Subsection \ref{sec:reduction} we show that it suffices to establish posterior contraction with respect to the prior 
\begin{align}
    \Pim( \cdot \,|\, \X_{N_n}) :=\mathcal{L}(\Phi(\Wn) \,|\, \X_{N_n}). \label{eq:cont graph prior}
\end{align}
 In Subsection \ref{sec:continuum} we show concentration properties of the limiting Gaussian field $W^{\M}.$ In Subsection \ref{sec:L infty conv} we establish the $L^{\infty}(\mu)$-convergence rate of $\Wn$ towards $W^{\M}$.
Combining the facts that $\Wn$ is close to $W^{\M}$ with the contraction properties of $W^{\M}$, we deduce that $\L(\Wn \,|\, \X_{N_n} )$ satisfies the three conditions \eqref{eq:entropy}, \eqref{eq:complement}, \eqref{eq:prior mass}. We complete the proof in Subsection \ref{sec:combine} by  lifting these conditions to $\L(\Phi(\Wn) \,|\, \X_{N_n} )$ in both the regression and classification problems.

\subsection{Reduction via Interpolation}\label{sec:reduction}
%First notice that $f_0$ is a function over $\M$ while the prior $\prior( \cdot | \X_{N_n})$ (and hence the posterior $\prior( \cdot  \,| \, \X_{N_n},\Y_n)$) is supported on  $L^{\infty}(\mu_{N_n})$ , whose size changes with $n$. In order to apply \cite{van2008rates}[Theorem 2.2] we need the sequence of priors to be supported on the same Banach space. Therefore we will push the discrete posterior forward to the continuum so that we get a sequence of measures over $L^{\infty}(\mu)$. We shall accomplish this with the help of transport maps. 
Here we show that in order to establish Theorem \ref{thm:Main Thm} it suffices to prove posterior contraction with respect to the continuum prior $\Pim( \cdot \,|\, \X_{N_n})$ defined in \eqref{eq:cont graph prior}. Let $\mathcal{I}:=\mathcal{I}_{N_n}:L^{\infty}(\mu_{N_n})\rightarrow L^{\infty}(\mu)$ be the interpolation map defined by $\mathcal{I}u:=u\circ T_{N_n}$. Specifically, we claim that, in order to establish Theorem \ref{thm:Main Thm}, it suffices to show that
\begin{align*}
    \mathbb{E}_{X}\mathbb{E}_{f_0}\Pim\left(f\in\mathcal{I}(L^{\infty}(\mu_{N_n})):\|f-f_0\|_n\geq M_n\eps_n\,|\, \X_{N_n} , \Y_n\right) \xrightarrow{n\rightarrow\infty} 0.
\end{align*}
The only property of the maps $T_{N_n}$ in \eqref{eq:sequenceprior} that we will use in this subsection is that $T_{N_n}(X_i) = X_i.$  
In order to establish the claim, let $A_{n}=\{f\in L^{\infty}(\mu_{N_n}):\|f-f_0\|_n\geq M_n\eps_{n}\}$. Since $A_n\subset \mathcal{I}^{-1}(\mathcal{I}(A_n))$ we have 
\begin{align*}
    \prior(A_n\,|\,\X_{N_n},\Y_n) \leq \prior(\mathcal{I}^{-1}(\mathcal{I}(A_n))\,|\,\X_{N_n},\Y_n)=\mathcal{I}_{\sharp}[\prior(\cdot\,|\,\X_{N_n},\Y_n)](\mathcal{I}(A_n)),
\end{align*}
where $\mathcal{I}_{\sharp}$ denotes push-forward through the map $\mathcal{I}.$ (Recall that the push-forward of a measure $\nu$ through the map $\mathcal{I}$ is defined by the relationship $(\mathcal{I}_{\sharp}\nu)(B):=\nu(\mathcal{I}^{-1}(B))$.)  Therefore, it follows from Lemma \ref{lemma:commutative} below that 
\begin{equation}
 \prior(A_n\,|\,\X_{N_n},\Y_n)  \le \Pim( \mathcal{I} (A_n) | \X_{N_n}, \Y_n), \label{eq:interpolated posterior}
\end{equation}
and hence it suffices to bound the right-hand side of \eqref{eq:interpolated posterior}. 
Since $T_{N_n}(X_i)=X_i$, we have
\begin{align*}
    \mathcal{I}(A_n)=\{f\circ T_{N_n}: \|f-f_0\|_n\geq M_n\eps_n \}&=\{f\circ T_{N_n}: \|f\circ T_{N_n}-f_0\|_n\geq M_n\eps_n \}\\
    &=\{f\in \mathcal{I}(L^{\infty}(\mu_{N_n})): \|f-f_0\|_n\geq M_n\eps_n\},
\end{align*}
and the claim is established.

\begin{lemma}\label{lemma:commutative}
It holds that  $   \Pim(\cdot\,|\, \X_{N_n},\Y_n)=\mathcal{I}_{\sharp}[ \prior(\cdot\,|\, \X_{N_n}, \Y_n)].$
\end{lemma}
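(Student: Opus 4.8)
The plan is to establish the identity first at the level of the priors and then upgrade it to the posteriors via Bayes' rule, exploiting the fact that the likelihood sees $f$ only through its values at the labeled points. First I would note that, for a fixed realization of the coefficients $\{\xi_i\}$, the definition \eqref{eq:sequenceprior} of $\Wn$ is nothing but $\mathcal{I}\wn=\wn\circ T_{N_n}$, since $\mathcal{I}$ acts by precomposition with $T_{N_n}$ and the truncated expansion \eqref{eq:KLtrun} is finite; as $\Phi$ is applied pointwise, $\Phi(\Wn)=\Phi(\wn)\circ T_{N_n}=\mathcal{I}(\Phi(\wn))$. By the definition of push-forward this already yields the prior-level statement
\[
\Pim(\cdot\,|\,\X_{N_n})=\mathcal{L}(\Phi(\Wn)\,|\,\X_{N_n})=\mathcal{L}(\mathcal{I}(\Phi(\wn))\,|\,\X_{N_n})=\mathcal{I}_{\sharp}\bigl[\prior(\cdot\,|\,\X_{N_n})\bigr].
\]

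Next I would record the key invariance of the likelihood. Writing $\ell_n(f):=\prod_{i=1}^n L_{Y_i|X_i}(f)$ for the joint likelihood in \eqref{eq:discrete likelihood 1}--\eqref{eq:discrete likelihood 2}, this functional depends on $f$ only through the values $f(X_1),\dots,f(X_n)$. Since $T_{N_n}(X_i)=X_i$ for every $i$ --- the one property of the transport maps used here --- we have $(\mathcal{I}f)(X_i)=f(T_{N_n}(X_i))=f(X_i)$, and hence $\ell_n(\mathcal{I}f)=\ell_n(f)$: the continuum likelihood, restricted to the image $\mathcal{I}(L^{\infty}(\mu_{N_n}))$ on which $\Pim(\cdot\,|\,\X_{N_n})$ is concentrated, pulls back under $\mathcal{I}$ to the discrete likelihood. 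Working on this image also removes any ambiguity in evaluating at the $X_i$, which form a $\mu$-null set.

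Finally I would combine the two facts using the change-of-variables identity $\int h\,d(\mathcal{I}_{\sharp}\nu)=\int (h\circ\mathcal{I})\,d\nu$: for any Borel set $B$,
\[
\int_B \ell_n\,d\Pim(\cdot\,|\,\X_{N_n})=\int \mathbf{1}_B(\mathcal{I}f)\,\ell_n(\mathcal{I}f)\,d\prior(f\,|\,\X_{N_n})=\int_{\mathcal{I}^{-1}(B)}\ell_n(f)\,d\prior(f\,|\,\X_{N_n}),
\]
where the first equality is the prior-level statement above and the second uses $\ell_n\circ\mathcal{I}=\ell_n$ together with $\mathbf{1}_B\circ\mathcal{I}=\mathbf{1}_{\mathcal{I}^{-1}(B)}$. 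Taking $B$ to be the whole space shows that the normalizing constants of the two posteriors coincide, and dividing then gives $\Pim(B\,|\,\X_{N_n},\Y_n)=\prior(\mathcal{I}^{-1}(B)\,|\,\X_{N_n},\Y_n)=\mathcal{I}_{\sharp}[\prior(\cdot\,|\,\X_{N_n},\Y_n)](B)$, which is the claim. The argument is essentially bookkeeping; the only points that warrant a line of justification are the measurability of $\mathcal{I}$ (immediate, since it is linear and bounded from $L^{\infty}(\mu_{N_n})$ into $L^{\infty}(\mu)$) and the well-definedness of pointwise evaluation noted above, so I do not anticipate a genuine obstacle here.
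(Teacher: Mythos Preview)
Your proposal is correct and follows essentially the same approach as the paper's proof: first establish the prior-level identity $\Pim(\cdot\,|\,\X_{N_n})=\mathcal{I}_{\sharp}[\prior(\cdot\,|\,\X_{N_n})]$ via $\Wn=\mathcal{I}\wn$ and the commutation of $\mathcal{I}$ with the pointwise link $\Phi$, then upgrade to the posterior via the change-of-variables formula for push-forwards together with the observation that the likelihood is invariant under $\mathcal{I}$ because $T_{N_n}(X_i)=X_i$. The paper's proof is structured identically (its Step~1 and Step~2 correspond exactly to your two stages), so there is nothing to add.
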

\begin{proof}
{\bf Step 1:}  First we show that $\Pim( \cdot\,| \X_{N_n})  = \mathcal{I}_{\sharp} [ \prior( \cdot\, | \X_{N_n})]. $
Since $\mathcal{I}$ is linear, we have that $\Wn = \mathcal{I}\wn.$ Then observe that $\mathcal{I} \bigl(\Phi(\wn)\bigr)=\Phi \bigl(\mathcal{I}(\wn)\bigr)$. Indeed, for $x\in T_{N_n}^{-1}(\{X_i\})$, we have
\begin{align*}
    \mathcal{I} \bigl(\Phi(\wn)\bigr)(x)=\Phi(\wn)(X_i)=\Phi\bigl(\wn(X_i)\bigr)=\Phi \bigl(\mathcal{I}(\wn)(x)\bigr)=\Phi \bigl(\mathcal{I}(\wn)\bigr)(x).
\end{align*}
Therefore, we have that 
$$\Pim( \cdot\,| \X_{N_n}) = \mathcal{L} \bigl(\Phi(\Wn) | \X_{N_n} \bigr) =\mathcal{L} \bigl(\mathcal{I} \bigl(\Phi(\wn)\bigr) | \X_{N_n} \bigr).$$
Finally, note that $\mathcal{I}_{\sharp} [\prior( \cdot\, | \X_{N_n}) ]=  \mathcal{L} \bigl(\mathcal{I} \bigl(\Phi(\wn)\bigr) | \X_{N_n} \bigr),$ since
\begin{align*}
\mathcal{I}_{\sharp} \prior (B | \X_{N_n}) = \prior \bigl(\mathcal{I}^{-1}(B) | \X_{N_n} \bigr) = \mathbb{P} \bigl(\Phi(\wn)\in \mathcal{I}^{-1}(B) | \X_{N_n} \bigr)= \mathbb{P} \bigl(\mathcal{I}(\Phi(\wn))\in B | \X_{N_n} \bigr).
\end{align*}
{\bf Step 2:} Now we show that $   \Pim(\cdot\,|\, \X_{N_n},\Y_n)=\mathcal{I}_{\sharp}[ \prior(\cdot\,|\, \X_{N_n}, \Y_n)].$
By definition of pushforward measure, it suffices to show that, for any measurable $B,$
\begin{align*}
      \Pim(B\,|\,\X_{N_n},\Y_n)=\prior(\mathcal{I}^{-1}(B)\,|\,\X_{N_n},\Y_n).
\end{align*}
Using Step 1, the left-hand side equals 
\begin{align}
    \Pim(B\,|\,\X_{N_n},\Y_n) 
    &=
    \frac{\int_B \prod_{i=1}^n L_{Y_i|X_i}(f) \,d\mathcal{I}_{\sharp}[\prior(f \,|\,\X_{N_n})]}{\int_{\mathcal{I}(L^{\infty}(\mu_{N_n}))} \prod_{i=1}^n L_{Y_i|X_i}(f)\, d\mathcal{I}_{\sharp}[\prior(f\,|\,\X_{N_n})]}, \label{eq:Aeq1}
\end{align}
where 
$L_{Y_i|X_i}(f)$ is given in \eqref{eq:discrete likelihood 1} and \eqref{eq:discrete likelihood 2}. Note that pointwise values of $f$ are well-defined since $\mathcal{I}_{\sharp}\prior$ is supported on $\mathcal{I}(L^{\infty}(\mu_{N_n}))$.
By the change-of-variable formula for pushforward measures, 
\begin{align*}
    \eqref{eq:Aeq1} = \frac{\int_{\mathcal{I}^{-1}(B)} \prod_{i=1}^n L_{Y_i|X_i}\circ \mathcal{I}(f_n) \,d\prior(f_n\,|\,\X_{N_n}) }{\int_{L^{\infty}(\mu_{N_n})} \prod_{i=1}^n L_{Y_i|X_i}\circ \mathcal{I}(f_n)\,d\prior(f_n\,|\,\X_{N_n})},
\end{align*}
which equals \eqref{eq:discrete posterior} with $B$ replaced by $\mathcal{I}^{-1}(B)$, by noticing that $L_{Y_i|X_i}\circ\mathcal{I}(f_n)$ is exactly the same as in \eqref{eq:discrete likelihood 1} and \eqref{eq:discrete likelihood 2}. The result follows. 
\end{proof}

%,  where $\Wn$ is now a Gaussian field on $\M$ which we will show in Section \ref{sec:L infty conv} convergence towards $W$ as defined in \eqref{eq:continuumprior}. 

%Therefore the road map below is to first show in Section \ref{sec:continuum} the three conditions \eqref{eq:entropy}, \eqref{eq:complement} and \eqref{eq:prior mass} for the limit process $W$, which are then inherited by $\Wn$ provided the convergence rate is fast enough and further by $\Phi(\Wn)$ given our assumptions on $\Phi$. 

\subsection{Regularity and Contraction Properties of the Limiting Field}\label{sec:continuum}
In this subection we study  the limit Gaussian measure $\pi=\mathcal{N}(0,(I-\Delta)^{-s})=\mathcal{L}(W^{\M})$. Recall that the samples admit Karhunen-Lo\`eve expansion
\begin{align}
    W^{\M}=\sum_{i=1}^{\infty}(1+\lambda_i)^{-\frac{s}{2}}\xi_i \psi_i,\quad \quad \xi_i\overset{i.i.d.}{\sim} \mathcal{N}(0,1), \label{eq:continuumprior2}
\end{align}
where $(\lambda_i,\psi_i)$'s are the eigenpairs of the Laplace-Beltrami operator $-\Delta$ on $\M$. Notice that a larger $s$ leads to a faster decay of the coefficients and hence more regular sample paths. From  Weyl's law that $\lambda_i\asymp i^{\frac{2}{m}}$ (see e.g. \cite{canzani2013analysis}[Theorem 72]), setting $s>\frac{m}{2}$ makes $\pi$ a well-defined measure on $L^2(\mu)$. To ensure almost sure continuity of the samples, so that point evaluations are well-defined and $\pi$ is supported over $L^{\infty}(\mu),$ we need the stronger assumption that $s>m-\frac{1}{2}$. 
\begin{lemma}\label{lemma:almost sure cty}
If $s>m-\frac12$, then samples of $\pi$ are almost surely continuous. 
\end{lemma}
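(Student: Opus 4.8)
The plan is to show that the random series \eqref{eq:continuumprior2} converges uniformly on $\M$ almost surely when $s > m - \tfrac12$, which yields a continuous modification of $W^\M$. The natural route is a metric-entropy / chaining argument applied to the Gaussian field $W^\M$, using the bound on Laplace--Beltrami eigenfunctions. The key quantitative input is the pointwise bound $\|\psi_i\|_{L^\infty(\M)} \lesssim \lambda_i^{(m-1)/4}$ (the standard H\"ormander sup-norm bound for eigenfunctions on a compact manifold), together with Weyl's law $\lambda_i \asymp i^{2/m}$ already recalled in the text. Combining these, the coefficient weights satisfy $(1+\lambda_i)^{-s/2}\|\psi_i\|_{L^\infty} \lesssim i^{-s/m} i^{(m-1)/(2m)}$, and one checks $\sum_i i^{2[-s/m + (m-1)/(2m)]} < \infty$ precisely when $s > m - \tfrac12$; this square-summability is what drives the argument.

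First I would estimate the canonical (Dudley) metric of the process, $\rho(x,y)^2 := \Expect |W^\M(x) - W^\M(y)|^2 = \sum_i (1+\lambda_i)^{-s}|\psi_i(x)-\psi_i(y)|^2$. Splitting the sum at a truncation level $K$: for $i \le K$ one uses the gradient bound $\|\nabla\psi_i\|_{L^\infty} \lesssim \lambda_i^{1/2}\|\psi_i\|_{L^\infty}$ to get $|\psi_i(x)-\psi_i(y)| \lesssim \lambda_i^{(m+1)/4} d_\M(x,y)$; for $i > K$ one uses the crude bound $|\psi_i(x)-\psi_i(y)| \le 2\|\psi_i\|_{L^\infty} \lesssim \lambda_i^{(m-1)/4}$. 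Optimizing over $K$ as a function of $d_\M(x,y)$ shows $\rho(x,y) \lesssim d_\M(x,y)^{\theta}$ for some $\theta > 0$ (a H\"older modulus), hence the metric entropy of $(\M,\rho)$ satisfies $\log N(\M,\rho,\eps) \lesssim \log(1/\eps)$ up to a power, so Dudley's entropy integral $\int_0^1 \sqrt{\log N(\M,\rho,\eps)}\,d\eps$ converges. By the standard Gaussian-process continuity theorem (see e.g. the references on Gaussian processes in the text), $W^\M$ then admits a version with almost surely continuous sample paths on $\M$.

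Alternatively — and this is the cleaner packaging I would actually write — I would bypass chaining and argue directly that the partial sums $W^\M_N := \sum_{i=1}^N (1+\lambda_i)^{-s/2}\xi_i\psi_i$ form an a.s.\ Cauchy sequence in $C(\M)$. Indeed $\Expect\big\|\sum_{i=N}^{M}(1+\lambda_i)^{-s/2}\xi_i\psi_i\big\|_{L^\infty(\M)}$ can be controlled, via a Gaussian maximal inequality combined with the entropy estimate above restricted to the tail, by a quantity that is $o(1)$ as $N\to\infty$ using the square-summability of $(1+\lambda_i)^{-s/2}\|\psi_i\|_{L^\infty}$; an application of Borel--Cantelli along a subsequence plus monotonicity of the tail sup-norms upgrades this to a.s.\ uniform convergence. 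Since each partial sum is continuous, the uniform limit is continuous, and it agrees with $W^\M$ in $L^2(\mu)$, so $W^\M$ has a continuous modification.

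The main obstacle is obtaining a usable modulus of continuity for the field, i.e.\ controlling the contribution of the high-frequency eigenfunctions, since individual eigenfunctions can be as large as $\lambda_i^{(m-1)/4}$ in sup-norm; the computation only closes because this growth is exactly compensated by the decay $(1+\lambda_i)^{-s/2}$ under the hypothesis $s > m - \tfrac12$, after accounting for Weyl's law. Everything else — the entropy integral bounds, the Gaussian maximal inequality, the passage from an $L^\infty$-convergence statement to the existence of a continuous version — is standard once this balance is in place. I would remark that the threshold $s > m-\tfrac12$ is improved to $s > m$-type conditions being unnecessary precisely on manifolds (such as the flat ones) where the eigenfunctions are uniformly bounded, consistently with the discussion following Theorem \ref{thm:Main Thm}.
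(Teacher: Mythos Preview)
Your proposal is correct and follows essentially the same route as the paper: both bound the incremental variance $\mathbb{E}_\xi|W^\M(x)-W^\M(y)|^2=\sum_i(1+\lambda_i)^{-s}|\psi_i(x)-\psi_i(y)|^2$ by splitting the series at a frequency cutoff, using a gradient bound on $\psi_i$ below the cutoff and the sup-norm bound $\|\psi_i\|_{L^\infty}\lesssim\lambda_i^{(m-1)/4}$ above it, then optimizing the cutoff to obtain a H\"older estimate $\rho(x,y)\lesssim d_\M(x,y)^\theta$. The only differences are in packaging: the paper concludes via a Kolmogorov-type criterion (\cite{lang2016continuity}, Corollary 4.5) rather than Dudley's entropy integral, and it uses the cruder gradient bound $\|\nabla\psi_i\|_{L^\infty}\lesssim\lambda_i^{(m+1)/2}$ from Proposition~\ref{thm:efunc gradient bound} in place of your sharper $\lambda_i^{(m+1)/4}$, which affects only the H\"older exponent and not the conclusion.
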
 
\begin{proof}
By \cite{lang2016continuity}[Corollary 4.5], it suffices to show that
 \begin{align*}
    \mathbb{E}_{\xi} |W^{\M}(x)-W^{\M}(y)|^2 \leq Cd_{\M}(x,y)^{\eta},
\end{align*}
for some $\eta>0$. We have
\begin{align*}
    \mathbb{E}_{\xi}|W^{\M}(x)-W^{\M}(y)|^2 & =\mathbb{E}_{\xi}\left(\sum_{i=1}^{\infty}(1+\lambda_i)^{-\frac{s}{2}}\xi_i(\psi_i(x)-\psi_i(y))\right)^2 \\
    &=\sum_{i=1}^{\infty}(1+\lambda_i)^{-s}|\psi_i(x)-\psi_i(y)|^2,
\end{align*}
since $\mathbb{E}_{\xi}\xi_i\xi_j=0$ for $i\neq j$. By Proposition \ref{thm:efunc gradient bound},
\begin{align*}
    |\psi_i(x)-\psi_i(y)| &\leq C\lambda_i^{\frac{m-1}{4}},\\
    |\psi_i(x)-\psi_i(y)| &\leq \|\nabla \psi_i \|_{L^{\infty}(\mu)} d_{\M}(x,y) \leq C\lambda_i^{\frac{m+1}{2}} d_{\M}(x,y). 
\end{align*}
Using Weyl's law, we have  
\begin{align*}
    \mathbb{E}_{\xi}|W^{\M}(x)-W^{\M}(y)|^2 &\lesssim \sum_{i=1}^{\infty} (1+\lambda_i)^{-s} \operatorname{min}\left\{\lambda_i^{m+1}d_{\M}(x,y)^2,\lambda_i^{\frac{m-1}{2}}\right\} \\
    & \lesssim \sum_{i=1}^{\infty}i^{-\frac{2s}{m}} \operatorname{min}\left\{i^{\frac{2m+2}{m}}d_{\M}(x,y)^2, i^{\frac{m-1}{m}}\right\}\\
    & \lesssim d_{\M}(x,y)^2\int_{1}^{K} z^{\frac{-2s+2m+2}{m}} dz +\int_{K}^{\infty} z^{\frac{-2s+m-1}{m}}dz.
\end{align*}
Setting $K= d_{\M}(x,y)^{-\frac{2m}{m+3}}$, we have 
\begin{align*}
    \mathbb{E}_{\xi}|W^{\M}(x)-W^{\M}(y)|^2 &\lesssim  d_{\M}(x,y)^{\frac{4s-4m+2}{m+3}}.
\end{align*}
The result  follows since  $s>m-\frac12$.
\end{proof}

\begin{remark}
The argument above relies on the worst case $L^{\infty}(\mu)$ bound on the eigenfunctions of $-\Delta$ given in Proposition \ref{thm:efunc gradient bound}. If the eigenfunctions are uniformly bounded (which holds for the family of flat manifolds \citep{toth2002riemannian} that includes, for instance, the torus), then continuity can be guaranteed for $s>\frac{m}{2}$. 
\end{remark}

From now on, we shall consider $\pi$ as a measure over $L^{\infty}(\mu)$ with continuous sample paths, where pointwise evaluation is well-defined. Using the series representation \eqref{eq:continuumprior2}, the reproducing kernel Hilbert space $\mathbb{H}$ associated with $\pi$ has the following characterization 
\begin{align}
	\mathbb{H}=\left\{h=\sum_{i=1}^{\infty} \ h_i  \psi_i: \|h\|_{\mathbb{H}}^2:=\sum_{i=1}^{\infty}  h_i^2   (1+\lambda_i)^s <\infty \right\}. \label{eq:rkhs}
\end{align}
From the general theory in \cite{van2008rates}, the concentration properties of $W$ can be characterized by the concentration function 
\begin{align}\label{eq:concenFunc}
\phi_{w_0}(\eps) := \underset{h\in\mathbb{H}:\|h-w_0\|_{L^{\infty}(\mu)}<\eps}{\operatorname{inf}}\, \|h\|^2_{\mathbb{H}} - \log \mathbb{P} (\|W\|_{L^{\infty}(\mu)}<\eps),
\end{align}
where $w_0$ belongs to the support of $\pi$.  For our purpose, $w_0$ will be set as $\Phi^{-1}(f_0)$. The three conditions \eqref{eq:entropy}, \eqref{eq:complement} and \eqref{eq:prior mass} hold for the $\eps_n$ that satisfies $\phi_{w_0}(\eps_n)\leq n\eps_n^2$. 

Before stating our main result in this subsection, we follow \cite{castillo2014thomas,coulhon2012heat} to define a Besov space which will be needed to characterize the regularity of $w_0$. Let $\Psi$ be an even function in the Schwartz space $\mathcal{S}(\mathbb{R})$ with 
\begin{align*}
    0\leq \Psi\leq 1,\quad \Psi \equiv 1\,\, \text{on}\,\, [-\frac{1}{2},\frac{1}{2}], \quad \text{supp}(\Psi)\subset[-1,1].
\end{align*}
Define the Besov space 
\begin{align*}
    B_{\infty,\infty}^{\beta} :=\left\{w: \|w\|_{B_{\infty,\infty}}^{\beta}:= \underset{j\in\mathbb{N}}{\operatorname{sup}\,} 2^{\beta j}\|\Psi_j(\sqrt{\Delta})w(\cdot)-w(\cdot)\|_{L^{\infty}(\mu)} <\infty\right\},
\end{align*}
where $\Psi_j(\cdot)=\Psi(2^{-j}\cdot)$ and, for $w=\sum_{i=1}^{\infty}w_i \psi_i,$ 
\begin{align*}
    \Psi_j(\sqrt{\Delta})w:=\sum_{i=1}^{\infty} \Psi_j(\sqrt{\lambda_i})w_i\psi_i.
\end{align*}
It was shown in \cite{coulhon2012heat}[Proposition 6.2] that the definition is independent of the choice of $\Psi$.

\begin{theorem}\label{thm:cont rate}
Suppose $w_0\in B_{\infty,\infty}^{\beta}$. Consider the prior $\pi=\mathcal{N}(0,(I-\Delta)^{-s})$ with  $s>\beta\wedge m-\frac12$.  Then there exists sets $B_n\subset L^{\infty}(\mu)$ so that
\begin{align*}
    \log N(3\eps_n,B_n,\|\cdot\|_{L^{\infty}(\mu)})&\leq  6Cn\eps_n^2, \\
    \pi(B_n^c)&\leq e^{-Cn\eps_n^2}, \\
    \pi(\|w-w_0\|_{L^{\infty}(\mu)}<2\eps_n)&\geq e^{-n\eps_n^2},
\end{align*}
where $\eps_n$ is a multiple of $n^{-\frac{(s-\frac{m}{2})\wedge \beta}{2s}}(\log n)^{\frac{(s-\frac{m}{2})\wedge \beta}{4s-2m}}$  and $C>1$ is a constant satisfying $e^{-Cn\eps_n^2}<\frac12$. 
\end{theorem}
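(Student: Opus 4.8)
The plan is to apply the general Gaussian-prior machinery recalled in Subsection \ref{ssec:generalprinciples}: once we bound the concentration function $\phi_{w_0}$ of $\pi = \mathcal{N}(0,(I-\Delta)^{-s})$ at level $\eps_n$ by $n\eps_n^2$, Theorem 2.1 of \cite{van2008rates} produces the sieves $B_n$ and the three displayed inequalities, with the stated numerical constants. So the real work is estimating the two pieces of \eqref{eq:concenFunc}: the small-ball exponent $-\log \pi(\|W\|_{L^\infty(\mu)}<\eps)$ and the approximation term $\inf_{h\in\mathbb H,\,\|h-w_0\|_{L^\infty}<\eps}\|h\|_{\mathbb H}^2$.

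For the \emph{small-ball probability}, I would work through the Karhunen–Lo\`eve series \eqref{eq:continuumprior2}. Using Weyl's law $\lambda_i\asymp i^{2/m}$, the coefficients decay like $i^{-s/m}$, so $W^\M$ lives in a Besov/Sobolev scale of smoothness $s-m/2$ (this is where the threshold $s>m/2$, and for $L^\infty$-continuity $s>m-\tfrac12$, enters, exactly as in Lemma \ref{lemma:almost sure cty}). A standard small-ball estimate for such Gaussian series — comparing the $L^\infty$ small-ball to the $L^2$ one via the metric entropy of the RKHS unit ball, or invoking the manifold small-ball results in \cite{castillo2014thomas} — gives $-\log\pi(\|W\|_{L^\infty(\mu)}<\eps)\lesssim \eps^{-m/(s-m/2)}$ up to a logarithmic factor; the $(\log n)$ power in the final $\eps_n$ is precisely the price of passing from $L^2$ to $L^\infty$ and of the crude eigenfunction bounds from Proposition \ref{thm:efunc gradient bound}.

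For the \emph{approximation term}, given $w_0\in B^\beta_{\infty,\infty}$ I would take $h$ to be a spectral truncation $h = \Psi_J(\sqrt\Delta)w_0$ (or a smoothed version thereof). The Besov assumption controls $\|h-w_0\|_{L^\infty(\mu)}\lesssim 2^{-\beta J}$, so choosing $2^{J}\asymp \eps^{-1/\beta}$ meets the constraint in \eqref{eq:concenFunc}; meanwhile the RKHS norm \eqref{eq:rkhs} of this truncation is controlled by $\sum_{\lambda_i\lesssim 2^{2J}} h_i^2(1+\lambda_i)^s$, which using the Besov bound and Weyl's law is $\lesssim 2^{(2s-2\beta)_+ J}\lesssim \eps^{-(2s-2\beta)_+/\beta}$ when $s>\beta$ (and is bounded when $s\le\beta$). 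Balancing $\eps^{-m/(s-m/2)}$ against $\eps^{-(2s-2\beta)_+/\beta}$ and against $n\eps^2$ yields the rate exponent $\tfrac{(s-m/2)\wedge\beta}{2s}$ in $\eps_n$, with the logarithmic correction coming from the small-ball term; one then checks $\phi_{w_0}(\eps_n)\le n\eps_n^2$ and that $C$ can be taken $>1$ with $e^{-Cn\eps_n^2}<\tfrac12$ for $n$ large.

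The main obstacle will be the small-ball probability in $L^\infty(\mu)$ on a general homogeneous manifold: unlike the Euclidean or torus case, the Laplace–Beltrami eigenfunctions are only bounded by the sup-norm estimate $\|\psi_i\|_{L^\infty}\lesssim\lambda_i^{(m-1)/4}$ of Proposition \ref{thm:efunc gradient bound}, which is what forces the extra $\log$ factor and the strengthened hypothesis $s>\beta\wedge m-\tfrac12$ rather than $s>\beta\wedge\tfrac m2$. Converting the metric-entropy bound for the RKHS unit ball into a sharp $L^\infty$ small-ball exponent — keeping careful track of the logarithmic powers so that the stated $\eps_n$ is exactly recovered — is the delicate step; everything else is a matter of plugging the two estimates into \eqref{eq:concenFunc} and quoting \cite{van2008rates}.
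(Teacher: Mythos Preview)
Your proposal is correct and follows essentially the same route as the paper: bound the concentration function via (i) the metric entropy of the RKHS unit ball combined with the Li--Linde duality \cite{li1999approximation} for the small-ball term, and (ii) the spectral truncation $h=\Psi_J(\sqrt\Delta)w_0$ with a dyadic Besov estimate for the decentering term, then invoke \cite{van2008rates}[Theorem 2.1]. The only refinement is that the paper obtains the $L^\infty$ entropy bound $\log N(\eps,\mathbb{H}_1,\|\cdot\|_{L^\infty(\mu)})\lesssim \eps^{-m/s}(\log\tfrac1\eps)^{1/2}$, and hence the logarithmic factor in $\eps_n$, by quoting \cite{kushpel2015entropy}[Theorem 3.12] for homogeneous manifolds as a black box rather than deriving it from the eigenfunction sup-norm bounds of Proposition~\ref{thm:efunc gradient bound}.
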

\begin{proof}
As noted above, it suffices to show that the above $\eps_n$ satisfies $\phi_{w_0}(\eps_n) \leq n\eps_n^2,$ where $\phi_{w_0}$ is defined in \eqref{eq:concenFunc}. To bound the small ball probability, it suffices by general results from \cite{li1999approximation} to bound the metric entropy of $\mathbb{H}_1$, the unit ball in $\mathbb{H}$. Notice that \eqref{eq:rkhs} has the form of a Sobolev ball of regularity $s$. Classical results for Sobolev spaces on $\mathbb{R}^m$ such as \cite{edmunds1996function}[Theorem 3.3.2] have shown that the entropy in the $L^{\infty}$ metric is bounded by a multiple of $\eps^{-m/s}$. The manifold case has been treated in \cite{kushpel2015entropy}[Theorem 3.12]  assuming homogeneity, which includes an additional logarithmic factor, i.e., 
\begin{align}
    \log N(\eps, \mathbb{H}_1,\|\cdot\|_{L^{\infty}(\mu)})\lesssim \eps^{-\frac{m}{s}}\left(\log \frac{1}{\eps}\right)^{\frac12}. \label{eq:metric entropy}
\end{align}
The original theorem was about entropy numbers but can be translated to the above statement on metric entropy and its proof suggested that when $p=2$  the theorem holds with $s>\frac{m}{2}$. In particular their Sobolev class is defined as
\begin{align*}
    I_s U_2:=\left\{h=h_1+\sum_{i=2}^{\infty}\lambda_i^{-\frac{s}{2}}h_i\psi_i: \sum_{i=1}^{\infty}h_i^2\leq 1\right\},
\end{align*}
which is compatible with our $\mathbb{H}$. (We remark that although \cite{kushpel2015entropy} defined their Sobolev class with a further mean-zero condition, their proof actually applied to the general case.) 
Now by \eqref{eq:metric entropy} and \cite{li1999approximation}[Theorem 1.2] we have
\begin{align}
    -\log \mathbb{P}(\|w\|_{L^{\infty}(\mu)}<\eps) \lesssim \eps^{-\frac{2m}{2s-m}} \left(\log \frac{1}{\eps}\right)^{\frac{s}{2s-m}}. \label{eq:small ball}
\end{align}

For the decentering function, let $C_0=\|w_0\|_{B^{\beta}_{\infty,\infty}}$ and consider $h=\Psi_J(\sqrt{\Delta})w_0$ with $J$ large enough so that $C_02^{-\beta J}\leq \eps$. Since $w_0\in B_{\infty,\infty}^{\beta}$, we have
\begin{align}
    \|\Psi_j(\sqrt{\Delta})w_0-w_0\|_{L^{\infty}(\mu)}\leq C_02^{-\beta j}, \quad \quad j\in\mathbb{N}. \label{eq:besov error decomp} 
\end{align}
In particular $\|h-w_0\|_{L^{\infty}(\mu)}\leq C_0 2^{-\beta J}\leq \eps$. Suppose $w_0$ has the representation $w_0=\sum_{i=1}^{\infty}w_i\psi_i$. We then have $h=\sum \Psi_J(\sqrt{\lambda_i}) w_i \psi_i\in \mathbb{H}$ since $\Psi_J(\sqrt{\lambda_i})=0$ for $\sqrt{\lambda_i}>2^J$. Moreover, since $\Psi_J\leq 1$,
\begin{align}
    \|h\|_{\mathbb{H}}^2 \leq \sum_{\sqrt{\lambda_i}\leq 2^J} w_i^2 (1+\lambda_i)^s\leq\sum_{\sqrt{\lambda_i}\leq 1}2^sw_i^2+\sum_{j=1}^J \sum_{2^{j-1}<\sqrt{\lambda_i}\leq 2^{j}} w_i^2(1+\lambda_i)^s. \label{eq:h hnorm}
\end{align}
By \eqref{eq:besov error decomp}, we have 
\begin{align*}
    \sum_{2^{j-1}< \sqrt{\lambda_i}\leq 2^j} w_i^2(1+\lambda_i)^s \lesssim 2^{2js}\sum_{2^{j-1}< \sqrt{\lambda_i}} w_i^2 &\leq2^{2js} \|\Psi_{j-1}(\sqrt{\Delta})w_0-w_0\|_2^2\\
    &\lesssim 2^{2js} \|\Psi_{j-1}(\sqrt{\Delta})w_0-w_0\|_{\infty}^2\lesssim 2^{2(s-\beta)j}.
\end{align*}
Therefore recalling that $C_02^{-\beta J}\leq \eps$,
\begin{align*}
    \eqref{eq:h hnorm}\lesssim \sum_{\sqrt{\lambda_i}\leq 1}2^sw_i^2 +\sum_{j=1}^J 2^{2(s-\beta)j}\lesssim 2^{2(s-\beta)J}\lesssim \eps^{-\frac{2(s-\beta)}{\beta}},
\end{align*}
since the first sum remains bounded as $J\rightarrow \infty$. Combining the above with \eqref{eq:small ball}, we get 
\begin{align*}
    \phi_{w_0}(\eps) \lesssim  \eps^{-\frac{2m}{2s-m}} \left(\log \frac{1}{\eps}\right)^{\frac{s}{2s-m}} + \eps^{-\frac{2(s-\beta)}{\beta}},
\end{align*}
which implies 
\begin{align*}
    \frac{\phi_{w_0}(\eps)}{\eps^2}\lesssim \eps^{-\frac{2s}{s-\frac{m}{2}}} \left(\log \frac{1}{\eps}\right)^{\frac{s}{2s-m}} + \eps^{-\frac{2s}{\beta}}.
\end{align*}
Therefore by choosing 
\begin{align*}
    \eps_n =Cn^{-\frac{(s-\frac{m}{2}\wedge \beta)}{2s}} (\log n)^{\frac{(s-\frac{m}{2})\wedge \beta}{4s-2m}}    
\end{align*}
for a large enough constant $C$, the condition $\phi_{w_0}(\eps_n)\leq n\eps_n^2$ is satisfied. The three assertions then follow from \cite{van2008rates}[Theorem 2.1].
\end{proof}

\subsection{Convergence of Gaussian Fields in $L^{\infty}(\mu)$} \label{sec:L infty conv}
In this subsection we establish $L^\infty(\mu)$ convergence  of $\Wn$ to $W^{\M}.$  We shall denote $N:=N_n$ throughout the rest of this subsection to simplify our notation.
Recall that 
\begin{align}
\begin{split}
    \Wn&=\sum_{i=1}^{k_{N}} \left[1+\lambda_i^{(N)}\right]^{-\frac{s}{2}} \xi_i \psi_i^{(N)}\circ T_{N}, \quad \quad \xi_i\overset{i.i.d.}{\sim} \mathcal{N}(0,1),  \\
     W^{\M} &=\sum_{i=1}^{\infty}(1+\lambda_i)^{-\frac{s}{2}}\xi_i \psi_i,\quad \quad \xi_i\overset{i.i.d.}{\sim} \mathcal{N}(0,1),
     \end{split}
\end{align}
In order to show the convergence, the transportation maps $T_{N}$ will be assumed to be close to the identity in the sense made precise in the following proposition from \cite{trillos2019error}[Theorem 2].
\begin{proposition} \label{prop:transmap}
For $\gamma>1$, there exists a transportation map $T_{N}:\M\rightarrow \{X_1,\ldots,X_{N}\}$  satisfying $T_{N_n}(X_i)=X_i$  so that, with probability $1-O(N^{-\gamma})$,
\begin{align}
   \rho_{N}:=\underset{x\in \mathcal{M}}{\operatorname{sup}}\, d_\M (x,T_{N}(x))\lesssim \frac{(\log N)^{p_m}}{N^{1/m}}, \label{eq:dinfty}
\end{align}
where $p_m = \frac34$ if $m=2$ and $p_m=\frac{1}{m}$ otherwise. 
\end{proposition}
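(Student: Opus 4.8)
The plan is to deduce this from Theorem~2 of \cite{trillos2019error}, an $\infty$-optimal-transport estimate tailored to homogeneous manifolds; here I indicate the two ingredients behind it. Write $\mu_N := \frac1N\sum_{i=1}^N \delta_{X_i}$ for the empirical measure of $\X_{N}$ and let $d_\infty(\mu,\mu_N)$ denote the $\infty$-transportation distance, i.e.\ the infimum over couplings $\pi$ of $\mu$ and $\mu_N$ of the $\pi$-essential supremum of $(x,y)\mapsto d_\M(x,y)$. The first and principal ingredient is the high-probability bound $d_\infty(\mu,\mu_N)\lesssim (\log N)^{p_m} N^{-1/m}$ with probability $1-O(N^{-\gamma})$; the second is a discretization step that upgrades an almost-optimal coupling to a map $T_N$ which is piecewise constant on cells adapted to the sample, thereby securing simultaneously $(T_N)_\sharp\mu=\mu_N$ and the interpolation identity $T_N(X_i)=X_i$.

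For the first ingredient I would argue as in \cite{trillos2019error} (in the spirit of the classical matching estimates of Ajtai--Koml\'os--Tusn\'ady). Since $\M$ is smooth and compact it is covered by finitely many charts that are bi-Lipschitz onto bounded open subsets of $\R^m$, and by a Bernstein/Chernoff bound the number of sample points landing in each chart concentrates around $N$ times its $\mu$-mass; hence it suffices to prove the Euclidean analogue for the empirical measure of $N$ i.i.d.\ uniform points in $[0,1]^m$. There one runs the dyadic-matching scheme: decompose $[0,1]^m$ into the $2^{mj}$ dyadic subcubes of each level $j=0,\dots,J$ with $2^{-J}\asymp N^{-1/m}$, control the discrepancy $\sup_Q|\mu_N(Q)-\mu(Q)|$ uniformly over all dyadic $Q$ by Bernstein's inequality together with a union bound over the polynomially-many subcubes (this is where the probability $1-O(N^{-\gamma})$ and the logarithmic prefactors enter), then move mass hierarchically from the finest to the coarsest level and sum the per-level displacements. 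The resulting series converges geometrically and yields the rate $(\log N/N)^{1/m}$ for $m\ge 3$, whereas the borderline case $m=2$, where the naive summation over scales diverges, requires the sharper two-dimensional argument producing the extra $(\log N)^{3/4}$ factor, i.e.\ $p_2=\tfrac34$.

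For the second ingredient, starting from a coupling $\pi$ whose $\pi$-essential supremum of $d_\M$ is bounded by a multiple of $(\log N)^{p_m}N^{-1/m}$, I would extract a Borel partition $\{U_i\}_{i=1}^N$ of $\M$ with $\mu(U_i)=1/N$, $X_i\in U_i$, and $\operatorname{diam}(U_i)\lesssim (\log N)^{p_m}N^{-1/m}$ — obtained by slightly enlarging each transport fibre and redistributing mass so that the measures become exact, which perturbs displacements only by a constant factor since $\operatorname{diam}(\M)$ is fixed — and then set $T_N\equiv X_i$ on $U_i$. By construction $(T_N)_\sharp\mu=\mu_N$, $T_N(X_i)=X_i$, and $\rho_N=\sup_{x\in\M}d_\M(x,T_N(x))\le\max_i\operatorname{diam}(U_i)$, which is exactly \eqref{eq:dinfty}.

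I expect the main obstacle to be the first ingredient: the uniform-in-scale control of the empirical discrepancy with the \emph{sharp} logarithmic exponent, and in particular the critical dimension $m=2$, which calls for a genuinely two-dimensional matching argument rather than the elementary summation over dyadic scales that works for $m\ge 3$; by contrast the reduction from $\M$ to $\R^m$ and the discretization step are routine given homogeneity, smoothness, and compactness. In the paper all of this is packaged in \cite{trillos2019error}[Theorem~2], which we simply invoke.
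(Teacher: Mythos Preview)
Your proposal is correct and, like the paper, ultimately rests on \cite{trillos2019error}[Theorem~2]. The paper does not reprove the transport estimate at all but simply cites it; your sketch of the dyadic-matching argument behind that theorem is accurate background but not required here. The one place you genuinely diverge is in securing the interpolation identity $T_N(X_i)=X_i$: you propose constructing a Borel partition $\{U_i\}$ with $X_i\in U_i$ and $\mu(U_i)=1/N$ from the near-optimal coupling, whereas the paper observes more directly that since $\{X_1,\dots,X_N\}$ has $\mu$-measure zero, one may take whatever measure-preserving map \cite{trillos2019error} furnishes and simply redefine it at the $N$ sample points to send each $X_i$ to itself --- this modification on a null set leaves the pushforward $(T_N)_\sharp\mu=\mu_N$ intact and cannot increase $\rho_N$ (the new displacement at $X_i$ is zero). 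Both arguments are valid, but the paper's is a one-sentence remark rather than a construction.
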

The transport map $T_{N_n}$ is a measure-preserving transformation in the sense that $\mu(T_{N_n}^{-1}(U))=\mu_{N_n}(U)$ for all $U\subset \M$ measurable.  The additional requirement $T_{N}(X_i)=X_i$ that was absent in \cite{trillos2019error}[Theorem 2] is valid here  since modifying $T_{N}$ on a set of $\mu$-measure 0 does not affect the measure preserving property and \eqref{eq:dinfty} still holds in this case. The scaling in \eqref{eq:dinfty} can be thought as the resolution of the point cloud,   and is important in suitably defining the choice of connectivity $\zeta_{N}$, as we shall see. 

%Define the $\infty$-OT distance between $\mu$ and $\mu_{N_n}$ as
%\begin{align*}
%    d_{\infty}(\mu,\mu_{N_n}):= \underset{T:T_{\sharp}\mu=\mu_{N_n}}{\operatorname{min}}\, \underset{x\in \M}{\operatorname{ess \, sup}}\, d_\M\bigl(x,T(x)\bigr),
%\end{align*}
%which can be interpreted as the resolution of the point cloud. 

The main result of this subsection is the following. 
\begin{theorem}\label{thm:L infty conv}
Let $\delta>0$ be arbitrary.
\begin{enumerate}
    \item $ \frac{3}{2}m-\frac12<s\leq \frac{9}{2}m+\frac52$. Set
    \begin{align*}
    \begin{cases}
    \zeta_N\asymp N^{-\frac{1}{m+4+\delta}}(\log N)^{\frac{p_m}{2}}, \quad  k_N\asymp N^{\frac{1}{(m+4+\delta)(6+\frac{6}{m})}}(\log N)^{-\frac{mp_m}{12m+12}}, \quad \quad & m\leq 4,\\
    \zeta_N\asymp N^{-\frac{1}{2m}}(\log N)^{\frac{p_m}{2}},\quad  k_N\asymp N^{\frac{1}{12m+12}}(\log N)^{-\frac{mp_m}{12m+12}}, \quad \quad &m\geq 5.
    \end{cases}
\end{align*}
Then, with probability tending to 1,
\begin{align*}
    \mathbb{E}_{\xi}\|\Wn-W^{\M} \|^2_{L^{\infty}(\mu)} \lesssim
    \begin{cases}
    N^{-\frac{2s-3m+1}{(m+4+\delta)(6m+6)}}(\log N)^{\frac{p_m(2s-3m+1)}{12m+12}}, \quad \quad &m\leq 4,\\
    N^{-\frac{2s-3m+1}{m(12m+12)}} (\log N)^{\frac{p_m(2s-3m+1)}{12m+12}},\quad \quad &m\geq 5.
    \end{cases}
\end{align*}

\item $s>\frac{9}{2}m+\frac52$. Set
\begin{align*}
    \begin{cases}
    \zeta_N\asymp N^{-\frac{1}{m+4+\delta}}(\log N)^{\frac{p_m}{2}},\quad  k_N\asymp N^{\frac{m}{(m+4+\delta)(2s-3m+1)}}(\log N)^{-\frac{mp_m}{4s-6m+2}}, \quad \quad  & m\leq 4, \\
    \zeta_N\asymp N^{-\frac{1}{2m}}(\log N)^{\frac{p_m}{2}},\quad  k_N\asymp N^{\frac{1}{4s-6m+2}}(\log N)^{-\frac{mp_m}{4s-6m+2}}, \quad \quad & m\geq 5.
    \end{cases}
\end{align*}
Then, with probability tending to 1,
\begin{align*}
    \mathbb{E}_{\xi}\|\Wn-W^{\M} \|^2_{L^{\infty}(\mu)} \lesssim
    \begin{cases}
    N^{-\frac{1}{m+4+\delta}}(\log N)^{\frac{p_m}{2}},\quad \quad &m\leq 4,\\
    N^{-\frac{1}{2m}}(\log N)^{\frac{p_m}{2}},\quad \quad &m\geq 5.
    \end{cases}
\end{align*}
\end{enumerate}
\end{theorem}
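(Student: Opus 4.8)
The plan is to use the common coupling of $\Wn$ and $W^{\M}$ through the same i.i.d.\ sequence $\{\xi_i\}$, writing
\begin{align*}
\Wn - W^{\M} = \underbrace{\sum_{i=1}^{k_N}\Bigl(\bigl[1+\lambda_i^{(N)}\bigr]^{-s/2}\,\psi_i^{(N)}\!\circ T_N - (1+\lambda_i)^{-s/2}\psi_i\Bigr)\xi_i}_{=:A_N} \;-\; \underbrace{\sum_{i>k_N}(1+\lambda_i)^{-s/2}\xi_i\psi_i}_{=:B_N},
\end{align*}
and further splitting $A_N = \sum_{i\leq k_N}\bigl([1+\lambda_i^{(N)}]^{-s/2}-(1+\lambda_i)^{-s/2}\bigr)\xi_i\,\psi_i^{(N)}\!\circ T_N + \sum_{i\leq k_N}(1+\lambda_i)^{-s/2}\xi_i\bigl(\psi_i^{(N)}\!\circ T_N-\psi_i\bigr)$. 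For every piece I would estimate $\mathbb{E}_\xi\|\cdot\|^2_{L^{\infty}(\mu)}$ by first applying the triangle inequality in $L^{\infty}(\mu)$ together with $\mathbb{E}|\xi_i|=\sqrt{2/\pi}$ and $\mathbb{E}\xi_i^2=1$, which bounds $\mathbb{E}_\xi\|\cdot\|^2_{L^{\infty}(\mu)}$ of each Gaussian series by the square of the sum of the sup-norms of its summands. This reduces the whole theorem to scalar estimates on the coefficients, on $\|\psi_i\|_{L^{\infty}(\mu)}$ and $\|\psi_i^{(N)}\!\circ T_N\|_{L^{\infty}(\mu)}$, and on the spectral errors $|\lambda_i^{(N)}-\lambda_i|$ and $\|\psi_i^{(N)}\!\circ T_N-\psi_i\|_{L^{\infty}(\mu)}$.

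For $B_N$, combine Weyl's law $\lambda_i\asymp i^{2/m}$ with the eigenfunction sup-norm bound $\|\psi_i\|_{L^{\infty}(\mu)}\lesssim\lambda_i^{(m-1)/4}$ from Proposition~\ref{thm:efunc gradient bound}: the tail $\sum_{i>k_N}(1+\lambda_i)^{-s/2}\|\psi_i\|_{L^{\infty}(\mu)}\lesssim\sum_{i>k_N}i^{(m-1-2s)/(2m)}$ converges exactly because $s>\tfrac32 m-\tfrac12$, and its square is $\asymp k_N^{-(2s-3m+1)/m}$ up to logarithmic factors. This identifies the truncation level $k_N$ as the quantity that trades off $B_N$, which decreases in $k_N$, against $A_N$, which increases in $k_N$ because the higher eigenpairs of $\Delta_N$ approximate those of $-\Delta$ more crudely and there are more of them in the sum.

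For $A_N$, I would invoke the $L^{\infty}$ spectral convergence rates of Proposition~\ref{thm:specRate}, which — for the similarity matrix \eqref{eq:similarity} with connectivity $\zeta_N$ taken above the point-cloud resolution $\rho_N\asymp N^{-1/m}(\log N)^{p_m}$ of Proposition~\ref{prop:transmap} — control $|\lambda_i^{(N)}-\lambda_i|$ and $\|\psi_i^{(N)}\!\circ T_N-\psi_i\|_{L^{\infty}(\mu)}$, and hence also $\|\psi_i^{(N)}\!\circ T_N\|_{L^{\infty}(\mu)}$, by a fixed power of $\lambda_i$ times a factor $r_N$ that decays in $N$. Substituting these, using $|(1+a)^{-s/2}-(1+b)^{-s/2}|\lesssim(1+b)^{-s/2-1}|a-b|$ and Weyl's law once more, each $i$-sum is governed by its endpoints $i=1$ or $i=k_N$; equating the resulting bound with the square of the $B_N$-tail determines the optimal $k_N$ as a power of $N$, and the constraint linking $r_N$, $\zeta_N$ and $\rho_N$ then pins down $\zeta_N$. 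The split $m\leq 4$ versus $m\geq 5$ comes from which lower bound on $\zeta_N$ is active — for large $m$ the resolution constraint $\zeta_N\gtrsim\rho_N$ becomes comparatively stringent, forcing the larger choice $\zeta_N\asymp N^{-1/(2m)}$ — while the split $s\leq\tfrac92 m+\tfrac52$ versus $s>\tfrac92 m+\tfrac52$ distinguishes whether the truncation tail or the irreducible graph-Laplacian bias ($\asymp\zeta_N$ after squaring) dominates the final rate; the two ranges agree at the boundary, where $2s-3m+1=6m+6$.

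The hard part is the last step: faithfully tracking the powers of $\lambda_i$ produced by Proposition~\ref{thm:specRate} (including the self-referential bound on $\|\psi_i^{(N)}\!\circ T_N\|_{L^{\infty}(\mu)}$, which feeds back through the eigenvalue-mismatch term), checking that each $i$-sum is indeed dominated by one of its endpoints, and choosing $(\zeta_N,k_N)$ so as to simultaneously satisfy the hypotheses of Proposition~\ref{thm:specRate}, keep $k_N\ll N$, and make the $A_N$-contribution no larger than the $B_N$-contribution; here the slack $\delta>0$ absorbs a borderline term in the variance part of the spectral estimate. Everything is carried out on the intersection of the probability-$(1-O(N^{-\gamma}))$ event of Proposition~\ref{prop:transmap} with the event on which the estimates of Proposition~\ref{thm:specRate} hold, which has probability tending to $1$; the logarithmic factors in the statement are precisely those inherited from the resolution $\rho_N$ of Proposition~\ref{prop:transmap} and from the spectral estimates of Proposition~\ref{thm:specRate}.
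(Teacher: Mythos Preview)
Your overall architecture matches the paper's exactly: the same coupling, the same three-term split (eigenvalue mismatch, eigenfunction mismatch, truncation tail), the same treatment of the tail via Weyl's law and the sup-norm bound $\|\psi_i\|_{L^\infty(\mu)}\lesssim\lambda_i^{(m-1)/4}$, and the same balancing to pick $k_N$ and $\zeta_N$.

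The gap is in the eigenfunction-mismatch term. You write that you would ``invoke the $L^{\infty}$ spectral convergence rates of Proposition~\ref{thm:specRate}'' to control $\|\psi_i^{(N)}\!\circ T_N-\psi_i\|_{L^{\infty}(\mu)}$, but Proposition~\ref{thm:specRate} gives only an $L^2(\mu)$ bound on the eigenfunction error. The paper spends a separate lemma (Lemma~\ref{lemma:L infty rate}) lifting this to $L^{\infty}(\mu)$, and that lifting is not free: it uses the pointwise consistency of $\Delta_N$ on smooth functions (Proposition~\ref{thm:pointwise}) together with the discrete elliptic estimate of Proposition~\ref{thm:Lip graph efun}, which bounds $\|u\|_{L^{\infty}(\mu_N)}$ by $(\Lambda+1)^{m+1}\|u\|_{L^1(\mu_N)}$ for functions with $\|\Delta_N u\|_{L^\infty}/\|u\|_{L^\infty}\le\Lambda$. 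Applying this with $u=\psi_i^{(N)}-\psi_i$ and $\Lambda\asymp\lambda_i$ is what produces the extra factor $\lambda_i^{m+1}$ in the $L^\infty$ eigenfunction error; that factor is precisely the origin of the exponents $6m+6$ and $12m+12$ in the statement, and without it you cannot recover the rates.

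This also corrects two of your diagnoses. The slack $\delta>0$ is not there to absorb a variance term in Proposition~\ref{thm:specRate}; it enters because the high-probability event of Proposition~\ref{thm:Lip graph efun} requires $\zeta_N^{-6m}\exp(-cN\zeta_N^{m+4})\to 0$, which forces $\zeta_N\gtrsim N^{-1/(m+4+\delta)}$. The split $m\le 4$ versus $m\ge 5$ is then the comparison of this lower bound with the choice $\zeta_N\asymp N^{-1/(2m)}$ that balances $\rho_N/\zeta_N$ against $\zeta_N$: for $m\le 4$ the constraint $N^{-1/(m+4+\delta)}$ is the binding one, for $m\ge 5$ the balancing choice already satisfies it. So the missing $L^2\to L^\infty$ step is not a detail---it is the source of the specific numerology you are trying to reproduce.
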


We remark that the statement ``with probability tending to 1'' refers to the randomness coming from the $X_i$'s. 
By solving for $N$ so that the rate matches $n^{-1}$, we get the following. 
\begin{corollary}\label{cor:L infty conv}
Let $\delta>0$ be arbitrary. Let
\begin{align*}
    N\asymp n^{\alpha_1}(\log n)^{\alpha_2},
\end{align*}
where 
\begin{align*}
    \begin{cases}
    \alpha_1=\frac{(m+4+\delta)(6m+6)}{2s-3m+1},\quad  \alpha_2=\frac{p_m(m+4+\delta)}{2},\quad \quad  &\text{if}\quad m\leq 4 \,\,,\,\,\frac32 m-\frac12<s\leq \frac{9}{2}m+\frac52,\\
    \alpha_1=\frac{m(12m+12)}{2s-3m+1},\quad  \alpha_2=mp_m,\quad \quad  &\text{if}\quad m \geq 5 \,\, ,\,\, \frac32 m-\frac12<s\leq \frac{9}{2}m+\frac52,\\
    \alpha_1=m+4+\delta,\quad  \alpha_2=\frac{p_m(m+4+\delta)}{2}, \quad \quad  &\text{if}\quad m\leq 4 \,\, ,\,\, s> \frac{9}{2}m+\frac52,\\
    \alpha_1=2m,\quad  \alpha_2=mp_m, \quad \quad  &\text{if}\quad m\geq 5 \,\, ,\,\, s> \frac{9}{2}m+\frac52.
    \end{cases}
\end{align*}
Let $\zeta_N$ and $k_N$ have the same scaling as in Theorem \ref{thm:L infty conv}. Then, with probability tending to 1,
\begin{align*}
    \mathbb{E}_{\xi}\|\Wn-W^{\M} \|^2_{L^{\infty}(\mu)} \lesssim n^{-1}.
\end{align*}
\end{corollary}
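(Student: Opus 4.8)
The corollary follows from Theorem \ref{thm:L infty conv} by an elementary substitution: we solve for $N$ so that the $L^\infty(\mu)$-rate is driven down to a multiple of $n^{-1}$, and the only content is checking that the prescribed $\alpha_1,\alpha_2$ are exactly the exponents that achieve this. So the plan is (a) to record a generic ``match $N^{-a}(\log N)^b$ with $n^{-1}$'' computation, (b) to read off $(a,b)$ in each of the four regimes of Theorem \ref{thm:L infty conv}, and (c) to verify that $1/a$ and $b/a$ equal the listed $\alpha_1$ and $\alpha_2$.

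First I would do the generic step. Each bound in Theorem \ref{thm:L infty conv} has the form $N^{-a}(\log N)^b$ with $a>0$ and $b\ge 0$ (in fact $b>0$, since $s>\tfrac32 m-\tfrac12$ forces $2s-3m+1>0$ and $p_m>0$). If $N\asymp n^{\alpha_1}(\log n)^{\alpha_2}$ with $\alpha_1>0$, then $\log N=\alpha_1\log n+\alpha_2\log\log n+O(1)\asymp\log n$, so up to multiplicative constants
\begin{align*}
N^{-a}(\log N)^b \ \asymp\ n^{-a\alpha_1}(\log n)^{\,b-a\alpha_2}.
\end{align*}
This is $\lesssim n^{-1}$ precisely when $a\alpha_1=1$ and $b-a\alpha_2\le 0$, i.e.\ $\alpha_1=1/a$ and $\alpha_2\ge b/a$; the corollary takes $\alpha_2=b/a$, which makes the logarithmic factor vanish.

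Then I would run the four regimes, reading $(a,b)$ directly from Theorem \ref{thm:L infty conv}: (i) $m\le4$, $\tfrac32 m-\tfrac12<s\le\tfrac92 m+\tfrac52$: $a=\tfrac{2s-3m+1}{(m+4+\delta)(6m+6)}$, $b=\tfrac{p_m(2s-3m+1)}{12m+12}$; (ii) $m\ge5$, same $s$-range: $a=\tfrac{2s-3m+1}{m(12m+12)}$, $b=\tfrac{p_m(2s-3m+1)}{12m+12}$; (iii) $m\le4$, $s>\tfrac92 m+\tfrac52$: $a=\tfrac1{m+4+\delta}$, $b=\tfrac{p_m}{2}$; (iv) $m\ge5$, $s>\tfrac92 m+\tfrac52$: $a=\tfrac1{2m}$, $b=\tfrac{p_m}{2}$. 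Using $12m+12=2(6m+6)$, a one-line computation in each case gives $1/a$ and $b/a$ equal to the $\alpha_1$ and $\alpha_2$ in the statement, so $\mathbb{E}_\xi\|\Wn-W^{\M}\|_{L^\infty(\mu)}^2\lesssim n^{-1}$. Finally, since Theorem \ref{thm:L infty conv} (together with the associated choice of $\zeta_N,k_N$, now regarded as functions of $n$ through $N$) holds on an $X$-event of probability tending to $1$, the same is true of the conclusion, and $\delta>0$ remains arbitrary throughout. There is no genuine analytical obstacle here; the only care needed is the bookkeeping across the four regimes and the harmless identification $\log N\asymp\log n$ absorbed into $\lesssim$.
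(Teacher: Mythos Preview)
Your proposal is correct and is exactly the approach the paper takes: the paper gives no proof beyond the one-line remark ``By solving for $N$ so that the rate matches $n^{-1}$,'' and you have simply filled in the bookkeeping of that substitution. The only (harmless) imprecision is the word ``precisely'' in your generic step---strictly, $a\alpha_1>1$ would also suffice---but since you immediately specialize to $\alpha_1=1/a$, $\alpha_2=b/a$ as the corollary does, this does not affect the argument.
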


\begin{remark}
For flat manifolds the results of Theorem \ref{thm:L infty conv} and Corollary \ref{cor:L infty conv} can be shown to hold for $s>m$ with corresponding modifications in the scaling of the parameters.
\end{remark}

The key to show the above results is to derive convergence rates for 
\begin{align*}
   |\lambda^{(N)}_i-\lambda_i|,\quad \quad \text{and} \quad \quad \|\psi_i^{(N)}\circ T_N-\psi_i\|_{L^{\infty}(\mu)}.
\end{align*}
We shall build our analysis on the existing results from the literature.
Recall that $\zeta_N$ is the connectivity of the graph and the resolution $\rho_N$  defined in \eqref{eq:dinfty}. Assuming we are in the event that \eqref{eq:dinfty} holds, the following results from \cite{sanz2020spde}[Theorem 4.6 \& 4.7] bound the spectral approximations. 
\begin{proposition}[Spectral Approximation] \label{thm:specRate}
Suppose  $\rho_N\ll \zeta_N$ and $\zeta_{N}\sqrt{\lambda_{k_N}}\ll 1$ for $N$ large. Then there exists orthonormalized eigenfunctions $\{\psi_i^{(N)}\}_{i=1}^{N}$ and $\{\psi_i\}_{i=1}^{\infty}$ so that, for $i=1,\ldots,k_N,$
\begin{align}
    |\lambda_{i}^{(N)}-\lambda_{i}| &\lesssim \lambda_{i}\left(\frac{\rho_N}{\zeta_{N}}+\zeta_{N}\sqrt{\lambda_{i}}\right),\label{eq:eval rate}\\
    \|\psi_{i}^{(N)}\circ T_N-\psi_{i}\|_{L^2(\mu)}^2 &\lesssim  i^3  \left(\frac{\rho_N}{\zeta_{N}}+\zeta_{N}\sqrt{\lambda_{i}}\right). \label{eq:efun rate}
\end{align}
\end{proposition}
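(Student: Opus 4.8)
The estimates \eqref{eq:eval rate}--\eqref{eq:efun rate} are quoted from \cite{sanz2020spde}[Theorems 4.6 and 4.7], so the plan is first to verify that the present ingredients fall within the regime treated there — the indicator similarity kernel \eqref{eq:similarity} of bandwidth $\zeta_N$, the measure-preserving transport maps $T_N$ of Proposition \ref{prop:transmap}, and the truncation level $k_N$ under the hypotheses $\rho_N\ll\zeta_N$ and $\zeta_N\sqrt{\lambda_{k_N}}\ll 1$ — and then invoke those theorems. For a self-contained argument I would start by recording the core Dirichlet-form comparison that drives everything: for $u\in C^3(\M)$ and its $T_N$-lift $u\circ T_N$ (piecewise constant on the transport cells $T_N^{-1}(\{X_i\})$), the graph Dirichlet form \eqref{eq:GL} satisfies $(u\circ T_N)^T\Delta_N(u\circ T_N)=\bigl(1+O(e_N(\Lambda))\bigr)\int_\M|\nabla u|^2\,d\mu$ together with the companion near-isometry $\|u\circ T_N\|_{L^2(\mu_N)}^2=\bigl(1+O(e_N(\Lambda))\bigr)\|u\|_{L^2(\mu)}^2$, where $\Lambda$ is the frequency scale of $u$ (so $\Lambda=\lambda_i$ when $u=\psi_i$) and $e_N(\Lambda):=\rho_N/\zeta_N+\zeta_N\sqrt{\Lambda}$. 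The displacement bound $\sup_x d_\M(x,T_N(x))\lesssim\rho_N$ of Proposition \ref{prop:transmap} accounts for the $\rho_N/\zeta_N$ term, the kernel bias for the $\zeta_N\sqrt{\Lambda}$ term, and the normalization constant $\tfrac{2(m+2)}{N\nu_m\zeta_N^{m+2}}$ in \eqref{eq:similarity} — obtained from the second moment of the indicator of a tangent ball — is exactly the one that makes the leading term $\int_\M|\nabla u|^2$ with coefficient one, i.e. $\Delta_N\to-\Delta$ and not a multiple.

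Given this comparison, the eigenvalue bound \eqref{eq:eval rate} follows from the Courant--Fischer min-max principle. For the upper bound $\lambda_i^{(N)}\le\lambda_i(1+Ce_N(\lambda_i))$ I would use $\mathrm{span}\{\psi_1\circ T_N,\dots,\psi_i\circ T_N\}$ as a trial subspace in the discrete Rayleigh quotient; almost-orthonormality and the form comparison do the rest. The matching lower bound is the more delicate direction: starting from the discrete eigenfunctions $\psi_1^{(N)},\dots,\psi_i^{(N)}$ one must pass to continuum test functions, which cannot be the raw piecewise-constant lifts (they have no usable gradient) but rather their mollifications at scale $\zeta_N$, whose Dirichlet energies are comparable to $\lambda_j^{(N)}$. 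This last comparison is only valid on frequencies below $1/\zeta_N$, i.e. for eigenvalues up to $\lambda_{k_N}$, which is precisely where the cutoff $k_N$ and the hypothesis $\zeta_N\sqrt{\lambda_{k_N}}\ll 1$ enter.

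For the eigenfunction bound \eqref{eq:efun rate} I would run a quantitative spectral-projection (Davis--Kahan-type) argument. Expanding $\psi_i^{(N)}\circ T_N$ in the $L^2(\mu)$-orthonormal basis $\{\psi_j\}$, the form and norm bounds together with the smallness of $|\lambda_i^{(N)}-\lambda_i|$ force the part of $\psi_i^{(N)}\circ T_N$ living on frequencies far from $\lambda_i$ to be of size $O(e_N)$; since eigenvalue multiplicities preclude control of individual eigenfunctions, one works at the level of spectral projectors onto clusters of eigenvalues near $\lambda_i$ and then \emph{chooses} the orthonormal families $\{\psi_i^{(N)}\}$ and $\{\psi_i\}$ so that they are aligned cluster by cluster — this is the meaning of ``there exist orthonormalized eigenfunctions''. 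Accumulating the cluster-level errors over the first $i$ eigenpairs and invoking Weyl's law $\lambda_i\asymp i^{2/m}$ to lower-bound the relevant inverse spectral gaps produces a polynomial prefactor, which the bookkeeping delivers as $i^3$.

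The hard part will be \eqref{eq:efun rate} in the presence of (nearly) degenerate eigenvalues: absent any a priori lower bound on spectral gaps one is forced to cluster projectors and aligned bases, and one must simultaneously track two changes of inner product — between the empirical $\ell^2(\mu_N)$ and $L^2(\mu)$ through the measure-preserving map $T_N$, and the mollification at scale $\zeta_N$ — uniformly over the first $k_N$ eigenpairs. It is this three-parameter bookkeeping in $\rho_N$, $\zeta_N$, $k_N$ that simultaneously produces the polynomial factor $i^3$ and the two competing error terms $\rho_N/\zeta_N$ and $\zeta_N\sqrt{\lambda_i}$. Once the normalization in \eqref{eq:similarity} and the parameter scalings are checked against the hypotheses, the conclusion is exactly \cite{sanz2020spde}[Theorems 4.6 and 4.7].
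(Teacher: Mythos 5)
Your proposal takes the same route as the paper: Proposition \ref{thm:specRate} is stated in the paper without proof, imported directly from \cite{sanz2020spde}[Theorems 4.6 and 4.7] after checking that the indicator kernel \eqref{eq:similarity}, the transport maps of Proposition \ref{prop:transmap}, and the hypotheses $\rho_N\ll\zeta_N$, $\zeta_N\sqrt{\lambda_{k_N}}\ll 1$ place the construction in the regime of those theorems. Your additional sketch of the underlying Dirichlet-form comparison, min-max argument, and cluster-aligned projection bound is a reasonable account of how the cited results are established, but it goes beyond anything the paper itself supplies.
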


\begin{remark}
Proposition \ref{thm:specRate} bounds the eigenfunction approximation in $L^2(\mu)$ norm and we need to lift such result to the $L^{\infty}(\mu_N)$ norm. We remark that $L^{\infty}(\mu_N)$ convergence rates have already been established in the literature (see e.g. \cite{dunson2019spectral,calder2020lipschitz}). However these results do not contain an explicit proportion constant in terms of the index $i$ as in \eqref{eq:efun rate}. Instead of going through their details to find the explicit constants, which would make our presentation much more involved, we directly show $L^{\infty}(\mu_N)$ bounds based on Proposition \ref{thm:specRate}, by following the same idea as in \cite{calder2020lipschitz}. Since we build our results from \eqref{eq:efun rate}, which is not the sharpest bound in the literature, our results in Theorem \ref{thm:L infty conv} and Corollary \ref{cor:L infty conv} suffer the same drawback. Nevertheless, the goal of this paper is to demonstrate the idea that unlabeled data helps and hence the finding sharpest scaling of $N_n$ is less essential.  
\end{remark}

Below we record four results from \cite{calder2019improved}[Theorem 3.3], \cite{donnelly2006eigenfunctions}[Theorem 1.2] and \cite{xu2005asymptotic}[Equation (2.10)],  \cite{calder2020rates}[Corollary 2.5] that will be needed. 

\begin{proposition}[Pointwise Error of $\Delta_N$] \label{thm:pointwise}
Let $f\in \mathcal{C}^3(\M)$. Then with probability  $1-2n\exp(-cN\zeta_N^{m+4})$,
\begin{align*}
    \|\Delta_Nf(x)-\Delta f(x)\|_{L^{\infty}(\mu_N)}\leq C(1+\|f\|_{\mathcal{C}^3(\M)})\zeta_N.
\end{align*}
\end{proposition}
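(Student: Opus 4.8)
This is the classical pointwise consistency estimate for the $\zeta_N$-graph Laplacian, and the plan is the standard one: fix a point $X_i\in\X_N$, write $\Delta_N f(X_i)$ as an empirical average, and split the error into a deterministic \emph{bias} and a stochastic \emph{fluctuation}. By \eqref{eq:similarity},
\[
  \Delta_N f(X_i)=\frac{2(m+2)}{N\nu_m\zeta_N^{m+2}}\sum_{j=1}^{N}\mathbf{1}\{|X_i-X_j|<\zeta_N\}\bigl(f(X_i)-f(X_j)\bigr),
\]
which, conditionally on $X_i$, is an average of $N-1$ i.i.d.\ bounded summands (the $j=i$ term vanishes). Set $b(X_i):=\Expect[\Delta_N f(X_i)\mid X_i]$.

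\textbf{Bias.} Since $\mu$ is uniform on $\M$, the quantity $b(X_i)$ equals $\tfrac{N-1}{N}$ times $\frac{2(m+2)}{\nu_m\zeta_N^{m+2}}\int_{\M}\mathbf{1}\{|X_i-y|<\zeta_N\}\bigl(f(X_i)-f(y)\bigr)\,d\mu(y)$, which I would estimate by passing to geodesic normal coordinates $v\in T_{X_i}\M$ centred at $X_i$. In those coordinates the extrinsic constraint $|X_i-y|<\zeta_N$ becomes $|v|<\zeta_N\bigl(1+O(\zeta_N^2)\bigr)$ — a Euclidean ball up to a curvature correction that is \emph{even} in $v$ — and the Riemannian volume element is $1+O(|v|^2)$. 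A second order Taylor expansion of $f$ with Lagrange remainder splits the integrand into a linear term, a quadratic term, and an $O(\|f\|_{\mathcal{C}^3}|v|^3)$ remainder. The linear term integrates to zero by the (even) symmetry of the domain and volume element; the normalization constant $2(m+2)/\nu_m$ in \eqref{eq:similarity} is calibrated precisely so that the quadratic term reproduces $\Delta f(X_i)$, up to curvature corrections of order $\|f\|_{\mathcal{C}^2}\zeta_N^2$; and the cubic remainder contributes at most $\frac{C}{\zeta_N^{m+2}}\int_{|v|<\zeta_N}\|f\|_{\mathcal{C}^3}|v|^3\,dv\lesssim\|f\|_{\mathcal{C}^3}\zeta_N$. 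Adding the $O\bigl(N^{-1}\|\Delta f\|_\infty\bigr)$ from the $\tfrac{N-1}{N}$ prefactor yields $|b(X_i)-\Delta f(X_i)|\le C(1+\|f\|_{\mathcal{C}^3})\zeta_N$, deterministically and uniformly in $X_i\in\M$.

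\textbf{Fluctuation.} On $\{|X_i-X_j|<\zeta_N\}$ one has $|f(X_i)-f(X_j)|\lesssim\|f\|_{\mathcal{C}^3}|X_i-X_j|\le\|f\|_{\mathcal{C}^3}\zeta_N$, so each summand is bounded in absolute value by $R\asymp\frac{\|f\|_{\mathcal{C}^3}}{N\zeta_N^{m+1}}$ and has conditional variance $\lesssim R^2\,\mu\bigl(B(X_i,\zeta_N)\bigr)\asymp\frac{\|f\|_{\mathcal{C}^3}^2}{N^2\zeta_N^{m+2}}$, for a total conditional variance $\sigma^2\lesssim\frac{\|f\|_{\mathcal{C}^3}^2}{N\zeta_N^{m+2}}$. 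Bernstein's inequality applied with $t\asymp\|f\|_{\mathcal{C}^3}\zeta_N$ then gives
\[
  \Prob\bigl(|\Delta_N f(X_i)-b(X_i)|>t\mid X_i\bigr)\le 2\exp\bigl(-c\min\{N\zeta_N^{m+4},\,N\zeta_N^{m+2}\}\bigr)=2\exp(-cN\zeta_N^{m+4}),
\]
using $\zeta_N\le1$. A union bound over the points of the data cloud, combined with the bias bound (and adjusting constants), gives the claimed control of $\|\Delta_N f-\Delta f\|_{L^\infty(\mu_N)}$ with the stated probability.

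\textbf{Main obstacle.} The delicate step is the bias computation. One must verify that $2(m+2)/\nu_m$ is exactly the constant identifying the second moment $\int_{|v|<\zeta_N}vv^{\mathsf{T}}\,dv$ of the truncation kernel with the Laplace--Beltrami operator, and that the geometric corrections — extrinsic versus geodesic ball, the Riemannian volume element, and the distortion of the Hessian in normal coordinates — enter only at order $\zeta_N^2$ and, crucially, preserve the oddness in $v$ that annihilates the first order term. The hypothesis $f\in\mathcal{C}^3$ (rather than $\mathcal{C}^2$) and the resulting $O(\zeta_N)$ (rather than $O(\zeta_N^2)$) rate are exactly what is forced here: the Lagrange remainder of the Taylor expansion is evaluated at a shifted point, is not odd in $v$, and so does not cancel; it can only be bounded by $\|f\|_{\mathcal{C}^3}|v|^3$.
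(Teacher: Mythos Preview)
Your argument is correct and follows the standard route for pointwise consistency of graph Laplacians: split into bias and fluctuation, handle the bias by a Taylor expansion in geodesic normal coordinates (oddness kills the linear term, the normalization $2(m+2)/\nu_m$ calibrates the quadratic term to $\Delta f$, and the $\mathcal{C}^3$ remainder gives the $O(\zeta_N)$ rate), and handle the fluctuation by Bernstein's inequality together with a union bound over the $N$ data points. Your variance and range computations are right, and the identification $\min\{N\zeta_N^{m+4},N\zeta_N^{m+2}\}=N\zeta_N^{m+4}$ yields exactly the stated exponent.

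The only thing to flag is that the paper does \emph{not} prove this proposition at all: it is recorded, without proof, as a citation of \cite{calder2019improved}[Theorem~3.3] (see the sentence immediately preceding the proposition, which lists it among ``four results\ldots that will be needed''). So there is no proof in the paper to compare against; what you have written is essentially the argument that the cited reference carries out. If anything, your sketch is a faithful reconstruction of that external proof rather than an alternative to anything in the present paper.
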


\begin{proposition}[Bounds on Eigenfunctions and Their Gradients] \label{thm:efunc gradient bound}
Let $\psi$ be an $L^2(\mu)$-normalized eigenfunction of $-\Delta$ associated with eigenvalue $\lambda\neq 0$. Then, for $k\in\mathbb{N},$
\begin{align*}
     \|\psi\|_{L^{\infty}(\mu)} &\leq C \lambda^{\frac{m-1}{4}}, \\
    \|\nabla^k \psi\|_{L^{\infty}(\mu)}&\leq C\lambda^{k+\frac{m-1}{2}}.
\end{align*}
\end{proposition}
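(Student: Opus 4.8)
The plan is to establish the two inequalities separately: the $L^\infty(\mu)$ bound on $\psi$ is the sharp Hörmander--Sogge spectral cluster estimate, and the derivative bounds then follow from a local elliptic rescaling argument; in the body of the paper both are quoted directly from the recorded references (the cluster bound underlies \cite{donnelly2006eigenfunctions}, and the gradient estimates appear in \cite{xu2005asymptotic}). Throughout, normalize $\|\psi\|_{L^2(\mu)}=1$; since each eigenspace of $-\Delta$ is finite dimensional and only finitely many eigenvalues lie below any fixed threshold, we may assume $\lambda$ is as large as needed, the remaining finitely many eigenfunctions being absorbed by enlarging $C$.

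For the first bound, pick an even $\rho\in\mathcal S(\mathbb R)$ with $\rho(0)=1$ and $\widehat\rho$ supported in an interval shorter than the injectivity radius of $\M$ (positive by compactness). Since $-\Delta\psi=\lambda\psi$, we have $\rho\bigl(\sqrt\lambda-\sqrt{-\Delta}\bigr)\psi=\rho(0)\psi=\psi$, so it suffices to control the operator norm of $\rho\bigl(\sqrt\lambda-\sqrt{-\Delta}\bigr):L^2(\mu)\to L^\infty(\mu)$. Expanding this operator by Fourier inversion in terms of the half-wave group $e^{it\sqrt{-\Delta}}$, inserting the Hadamard parametrix on the (short) time support of $\widehat\rho$, and estimating the resulting oscillatory integral by stationary phase yields
\[
  \bigl\|\rho\bigl(\sqrt\lambda-\sqrt{-\Delta}\bigr)\bigr\|_{L^2(\mu)\to L^\infty(\mu)}\lesssim \lambda^{\frac{m-1}{4}},
\]
hence $\|\psi\|_{L^\infty(\mu)}\lesssim \lambda^{(m-1)/4}$.

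For the derivative bounds, fix $x_0\in\M$ and work in geodesic normal coordinates about $x_0$ on a ball of radius a fixed fraction of the injectivity radius; by compactness the metric there is bounded in $C^\infty$ and uniformly elliptic, with constants independent of $x_0$. Rescale with $r:=\lambda^{-1/2}$: set $\widetilde\psi(y):=\psi(\exp_{x_0}(ry))$ on $B_1$, so that $\widetilde L\widetilde\psi=\widetilde\psi$ on $B_1$ for an elliptic operator $\widetilde L$ whose coefficients are bounded in $C^\infty$ uniformly in $x_0$ and $\lambda$ (the rescaled metric tending to the Euclidean one). Iterated interior Schauder (or $L^p$) estimates give, for each $k$, $\|\widetilde\psi\|_{C^k(B_{1/2})}\le C_k\|\widetilde\psi\|_{L^\infty(B_1)}\le C_k\lambda^{(m-1)/4}$ by the first part. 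Undoing the scaling, $\nabla^j_x\psi(x_0)=\lambda^{j/2}\nabla^j_y\widetilde\psi(0)$, so $\|\nabla^k\psi\|_{L^\infty(\mu)}\le C_k\lambda^{k/2+(m-1)/4}$, which implies the stated bound $\lambda^{k+(m-1)/2}$ since $k/2+(m-1)/4\le k+(m-1)/2$ for all $k\ge0$.

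The one genuinely deep ingredient is the exponent $\tfrac{m-1}{4}$ in the first bound: it rests on the wave-equation (Hadamard) parametrix and a stationary-phase estimate and is not recoverable from Sobolev embedding alone (which would give only $\lambda^{m/4+\eps}$). By contrast the derivative step is routine — indeed for $k\ge1$ even the crude chain $\|\nabla^k\psi\|_{L^\infty(\mu)}\lesssim\|\psi\|_{H^{k+m/2+\eps}}\lesssim\lambda^{(k+m/2+\eps)/2}$ already beats the claimed bound, so only the $k=0$ case strictly needs the spectral cluster machinery. Hence the main obstacle is really just invoking the sharp $L^\infty$ eigenfunction estimate correctly; the rest is assembly.
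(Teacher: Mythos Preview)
Your argument is correct, and indeed you prove more than the paper states. The paper does not give a proof of this proposition at all: it is one of four results \emph{recorded} from the literature, with the eigenfunction bound attributed to \cite{donnelly2006eigenfunctions}[Theorem 1.2] and the gradient bound to \cite{xu2005asymptotic}[Equation (2.10)]. Your sketch supplies the actual content behind those citations --- the H\"ormander--Sogge spectral cluster estimate via the half-wave parametrix for the $L^\infty$ bound, and the standard $\lambda^{-1/2}$-rescaling plus interior elliptic regularity for the derivatives --- and both steps are sound. It is worth remarking that your rescaling argument in fact yields the sharper exponent $\tfrac{k}{2}+\tfrac{m-1}{4}$ for $\|\nabla^k\psi\|_{L^\infty(\mu)}$, which you then weaken to the stated $k+\tfrac{m-1}{2}$; the paper only ever uses the weaker form (e.g.\ in the proof of Lemma~\ref{lemma:almost sure cty}), so nothing is lost, but your observation that the derivative step is routine once the $k=0$ cluster bound is in hand is exactly right and slightly undersells what you have shown.
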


\begin{proposition}[$L^{\infty}$ Bound In Terms of $L^1$ Bound] \label{thm:Lip graph efun}
Suppose $\zeta_N\leq \frac{c}{\Lambda+1}$ where $c$ is a sufficiently small constant depending only on $\M$. Then with probability $1-C\zeta_N^{-6m}\exp(-cN\zeta_N^{m+4})-2N\exp(-cN(\Lambda+1)^{-m})$ 
\begin{align*}
    \|u\|_{L^{\infty}(\mu_N)}\leq C(\Lambda+1)^{m+1}\|u\|_{L^1(\mu_N)}
\end{align*}
for all $u\in L^2(\mu_N)$ with $\lambda_u<\Lambda,$ where 
\begin{align}
    \lambda_u :=\frac{\|\Delta_N u\|_{L^{\infty}(\mu_N)}}{\|u\|_{L^{\infty}(\mu_N)}}.\label{eq:lambda_u}
\end{align}
\end{proposition}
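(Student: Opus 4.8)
The plan is to prove this as the graph analogue of the classical elliptic estimate that a function with $-\Delta u$ controlled pointwise cannot be much larger at a single point than on average, by iterating a discrete mean-value identity. First I would write $M:=\|u\|_{L^{\infty}(\mu_N)}$ and fix $x^{\star}\in\X_N$ with $u(x^{\star})=M$ (replacing $u$ by $-u$ if the extreme value is $-M$). Since $\Delta_N\mathbf 1=0$, the nonnegative function $v:=M-u$ obeys $\Delta_N v=-\Delta_N u$, so $\|\Delta_N v\|_{L^{\infty}(\mu_N)}\le\Lambda M$ and $v(x^{\star})=0$; the goal is to show $v$ is small on a ball of definite mass around $x^{\star}$.

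Because $H_{ij}$ is a constant multiple of $\mathbf 1\{|X_i-X_j|<\zeta_N\}$, the identity $\Delta_N=D-H$ reads, for every $i$,
\[
\operatorname{avg}_{B_{\zeta_N}(X_i)\cap\X_N} v \;=\; v(X_i)-\frac{(\Delta_N v)(X_i)}{D_{ii}} .
\]
On the event where the degrees concentrate one has $D_{ii}\asymp\zeta_N^{-2}$ uniformly in $i$ (a Bernstein estimate for $\#\{j:|X_i-X_j|<\zeta_N\}\asymp N\zeta_N^{m}$, together with a union bound over a $\zeta_N$-net — this is where the probability defect $C\zeta_N^{-6m}\exp(-cN\zeta_N^{m+4})$ enters, consistently with Proposition \ref{thm:pointwise}). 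Hence $v$ is a discrete supersolution: with $P$ the neighbourhood-averaging (random-walk) operator $Pv(X_i):=\operatorname{avg}_{B_{\zeta_N}(X_i)}v$, we get $Pv\le v+C\zeta_N^{2}\Lambda M\,\mathbf 1$ pointwise. Since $P$ is row-stochastic, induction yields $P^{k}v\le v+kC\zeta_N^{2}\Lambda M\,\mathbf 1$, so evaluating at $x^{\star}$,
\[
\sum_{i}p^{(k)}(x^{\star},X_i)\,v(X_i)=P^{k}v(x^{\star})\le kC\zeta_N^{2}\Lambda M ,
\]
where $p^{(k)}$ is the $k$-step transition kernel.

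Next I would take $r\asymp(\Lambda+1)^{-1/2}$ with a sufficiently small implicit constant — admissible because $\zeta_N\le c/(\Lambda+1)\le cr^{2}\ll r$ — and $k\asymp(r/\zeta_N)^{2}$, so that $p^{(k)}(x^{\star},\cdot)$ has spread over the manifold ball $B_r(x^{\star})$ and obeys a heat-kernel lower bound $p^{(k)}(x^{\star},X_i)\gtrsim 1/\#(B_r(x^{\star})\cap\X_N)$ there. The last display then gives $\operatorname{avg}_{B_r(x^{\star})}v\lesssim k\zeta_N^{2}\Lambda M\asymp r^{2}\Lambda M\le\tfrac12 M$. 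Hence $\operatorname{avg}_{B_r(x^{\star})}u\ge\tfrac12 M$, and since $u\le M$ everywhere Markov's inequality forces $\mu_N(\{u\ge\tfrac14 M\}\cap B_r(x^{\star}))\gtrsim\mu_N(B_r(x^{\star}))\gtrsim r^{m}$, the volume lower bound holding off an event of probability of the order of the last term in the statement. Therefore $\|u\|_{L^{1}(\mu_N)}\ge\tfrac{M}{4}\,\mu_N(\{u\ge\tfrac14 M\})\gtrsim(\Lambda+1)^{-m/2}M$, which already yields $\|u\|_{L^{\infty}(\mu_N)}\lesssim(\Lambda+1)^{m/2}\|u\|_{L^{1}(\mu_N)}$ and hence, a fortiori, the stated bound; the weaker exponent $m+1$ and the precise failure probabilities are those of the more robust discrete argument of \cite{calder2020rates} from which the proposition is quoted.

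The main obstacle is the third step: establishing, uniformly over the iteration and over all base points, the discrete heat-kernel lower bound together with the two-sided volume estimate $\mu_N(B_\rho(x))\asymp\rho^{m}$ for all $\rho\gtrsim\zeta_N$ — this quantitative point-cloud regularity is precisely what produces the two failure probabilities. A chaining-free alternative is a discrete De Giorgi iteration: truncate $u$ at a geometric sequence of levels, bound the Dirichlet energy $\tfrac12\sum_{i,j}H_{ij}|u(X_i)-u(X_j)|^{2}$ of each truncation using the quadratic form in \eqref{eq:GL} together with $\|\Delta_N u\|_{L^{\infty}(\mu_N)}\le\Lambda M$, and close the recursion with a discrete Sobolev inequality valid at scale $\zeta_N$; this is the route of \cite{calder2019improved,calder2020rates} and delivers the same conclusion.
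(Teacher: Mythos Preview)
The paper does not prove this proposition: it is one of four external results explicitly \emph{recorded} from the literature --- this one is \cite{calder2020rates}[Corollary 2.5] --- and invoked as a black box in the proof of Lemma~\ref{lemma:L infty rate}. There is therefore no in-paper argument to compare your proposal against; the paper's ``proof'' is simply the citation.

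That said, your sketch is a reasonable outline of how such an estimate is obtained, and you correctly identify at the end that the De~Giorgi iteration is the route actually taken in \cite{calder2019improved,calder2020rates}. The mean-value\,/\,random-walk iteration you lead with is a legitimate alternative and, as you observe, would even deliver the sharper exponent $m/2$; but the uniform discrete heat-kernel lower bound you flag as ``the main obstacle'' is itself a substantial result on random geometric graphs (essentially a parabolic Harnack inequality at all scales $\gtrsim\zeta_N$), so the argument is not self-contained at that step and would require its own proof or citation. The paper sidesteps all of this by quoting the finished corollary.
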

Notice that the high probability condition is satisfied only if 
\begin{align*}
    \zeta_N^{-6m}\exp(-cN\zeta_N^{m+4})-2N\exp(-cN(\Lambda+1)^{-m})\rightarrow 0
\end{align*}
and this requires some care in setting the scaling of $\zeta_N$ and $k_N$. 
In particular a sufficient condition for $\zeta_N$ is $\zeta_N \gtrsim N^{-\frac{1}{m+4+\delta}}$ for an arbitrarily small $\delta>0$.  On the other hand the condition for $\Lambda$ reduces to the scaling of $k_N$. 
Indeed if $u=\psi_i^{(N)}$, then $\lambda_u=\lambda^{(N)}_i$ and therefore $\Lambda$ can be chosen as a multiple of $\lambda_i$ given the spectral approximation in Proposition \ref{thm:specRate}. We will show that the same choice of $\Lambda$ suffices when $u=\psi_i^{(N)}-\psi_i$. Since we are interested in bounding the first $k_N$ eigenfunctions, we can set $\Lambda$ to be multiple of $\lambda_{K_N}$. Therefore in order to make $2N\exp\left(-cN(\Lambda+1\right)^{-m})\ll 1$, it is sufficient to have $\lambda_{k_N} \lesssim N^{\frac{1-\delta}{m}}$, i.e., if $k_N \lesssim N^{\frac{1-\delta}{2}}$ by Weyl's law. Therefore we should keep the following in mind when we set the scaling later
\begin{align}
     N^{-\frac{1}{m+4+\delta}}\lesssim \zeta_N,\quad \quad k_N \lesssim N^{\frac{1-\delta}{m}},\quad \quad \zeta_N k_N^{\frac{2}{m}}\lesssim 1, \label{eq:para scale}
\end{align}
where the third requirement corresponds to the condition that $\zeta_N (\Lambda+1)\leq c$.

Now we are ready to show the $L^{\infty}(\mu_N)$ bound of eigenfunction approximations.  
\begin{lemma}\label{lemma:L infty rate}
Under the same conditions and the intersection of the high probability events as in Propositions \ref{thm:specRate} and \ref{thm:Lip graph efun} we have, for $i=1,\ldots,k_N,$
\begin{align*}
    \|\psi_i^{(N)}\circ T_N-\psi_i\|_{L^{\infty}(\mu)} \lesssim  \lambda_i^{m+1}i^\frac{3}{2}\sqrt{\frac{\rho_N}{\zeta_N}+\zeta_N\sqrt{\lambda_i}}.
\end{align*}

\end{lemma}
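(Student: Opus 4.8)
The plan is to apply the $L^\infty$--$L^1$ comparison of Proposition \ref{thm:Lip graph efun} to the graph function $u_i := \psi_i^{(N)} - \psi_i|_{\X_N} \in L^2(\mu_N)$, convert its $L^1(\mu_N)$ norm into the $L^2(\mu_N)$ bound of Proposition \ref{thm:specRate}, and then pass back from $L^\infty(\mu_N)$ to $L^\infty(\mu)$ using that $T_N$ displaces points by at most $\rho_N$.

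First I would record the elementary reductions
\[
\|\psi_i^{(N)}\circ T_N - \psi_i\|_{L^\infty(\mu)} \le \|u_i\|_{L^\infty(\mu_N)} + \|\nabla\psi_i\|_{L^\infty(\mu)}\,\rho_N,
\]
valid because on each preimage set $T_N^{-1}(\{X_j\})$ the left side equals $|\psi_i^{(N)}(X_j)-\psi_i(x)|$ with $d_{\M}(X_j,x)\le\rho_N$, together with the analogous estimate $\|u_i\|_{L^2(\mu_N)} \le \|\psi_i^{(N)}\circ T_N-\psi_i\|_{L^2(\mu)} + \|\nabla\psi_i\|_{L^\infty(\mu)}\rho_N$ (using that $T_N$ is measure preserving). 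By Proposition \ref{thm:efunc gradient bound} both correction terms are $O(\lambda_i^{(m+1)/2}\rho_N)$, and using $\rho_N\ll\zeta_N$, $\zeta_N\sqrt{\lambda_{k_N}}\ll1$ and the Weyl relation $i^{3/2}\asymp\lambda_i^{3m/4}$ one checks these are dominated by $\lambda_i^{m+1}i^{3/2}\sqrt{\rho_N/\zeta_N+\zeta_N\sqrt{\lambda_i}}$. Hence it suffices to bound $\|u_i\|_{L^\infty(\mu_N)}$, and via $\|u_i\|_{L^1(\mu_N)}\le\|u_i\|_{L^2(\mu_N)}$ and Proposition \ref{thm:specRate} it is enough to prove $\|u_i\|_{L^\infty(\mu_N)}\lesssim\lambda_i^{m+1}\|u_i\|_{L^1(\mu_N)}$.

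To apply Proposition \ref{thm:Lip graph efun} to $u_i$ I must control the quotient $\lambda_{u_i}=\|\Delta_N u_i\|_{L^\infty(\mu_N)}/\|u_i\|_{L^\infty(\mu_N)}$. Writing $\Delta_N u_i = \lambda_i u_i + (\lambda_i^{(N)}-\lambda_i)\psi_i^{(N)} - (\Delta_N\psi_i|_{\X_N}-\lambda_i\psi_i|_{\X_N})$ and estimating the three terms by Proposition \ref{thm:specRate} (eigenvalue rate), by Proposition \ref{thm:Lip graph efun} applied to $\psi_i^{(N)}$ itself --- which gives $\|\psi_i^{(N)}\|_{L^\infty(\mu_N)}\lesssim\lambda_i^{m+1}$ since $\lambda_{\psi_i^{(N)}}=\lambda_i^{(N)}\asymp\lambda_i\le\lambda_{k_N}$ and $\zeta_N\lambda_{k_N}\lesssim1$ by \eqref{eq:para scale} --- and by Proposition \ref{thm:pointwise} with $f=\psi_i$ (whose $\mathcal{C}^3(\M)$-norm is $\lesssim\lambda_i^{(m+5)/2}$ by Proposition \ref{thm:efunc gradient bound}), one gets $\|\Delta_N u_i\|_{L^\infty(\mu_N)}\le\lambda_i\|u_i\|_{L^\infty(\mu_N)}+D_i$ with $D_i\lesssim\lambda_i^{m+2}(\rho_N/\zeta_N+\zeta_N\sqrt{\lambda_i})+\lambda_i^{(m+5)/2}\zeta_N$. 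Then I would split into two cases. If $\|u_i\|_{L^\infty(\mu_N)}<D_i/\lambda_i$, the conclusion follows at once after checking $D_i/\lambda_i\lesssim\lambda_i^{m+1}\sqrt{\rho_N/\zeta_N+\zeta_N\sqrt{\lambda_i}}$ (again using $\rho_N\ll\zeta_N$, $\zeta_N\sqrt{\lambda_{k_N}}\ll1$ and $m\ge2$). Otherwise $\lambda_{u_i}\le2\lambda_i$, so Proposition \ref{thm:Lip graph efun} with $\Lambda$ a fixed multiple of $\lambda_i$ (admissible since $\zeta_N\lambda_{k_N}\lesssim1$) gives $\|u_i\|_{L^\infty(\mu_N)}\lesssim\lambda_i^{m+1}\|u_i\|_{L^1(\mu_N)}\le\lambda_i^{m+1}\|u_i\|_{L^2(\mu_N)}\lesssim\lambda_i^{m+1}i^{3/2}\sqrt{\rho_N/\zeta_N+\zeta_N\sqrt{\lambda_i}}$ after inserting Proposition \ref{thm:specRate} and the first-paragraph transport estimate. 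Combining the cases with the reduction of the second paragraph finishes the proof. (The index $i=1$, with $\lambda_1=0$, is trivial, and the finitely many indices with $\lambda_i$ not bounded below are absorbed into the implicit constants.)

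The main obstacle is the apparent circularity: the hypothesis $\lambda_{u_i}<\Lambda$ of Proposition \ref{thm:Lip graph efun} has $\|u_i\|_{L^\infty(\mu_N)}$ in its denominator, precisely the quantity to be estimated, so the proposition cannot be applied naively; the dichotomy on whether $\|u_i\|_{L^\infty(\mu_N)}$ exceeds $D_i/\lambda_i$ is what breaks the loop. A secondary but fiddly point is the bookkeeping of powers of $\lambda_i$ and $i$ needed to verify that the $T_N$-transport errors ($O(\lambda_i^{(m+1)/2}\rho_N)$) and the pointwise-consistency error ($O(\lambda_i^{(m+5)/2}\zeta_N)$) are all absorbed into $\lambda_i^{m+1}i^{3/2}\sqrt{\rho_N/\zeta_N+\zeta_N\sqrt{\lambda_i}}$, which hinges on the regime $\rho_N\ll\zeta_N$, $\zeta_N\sqrt{\lambda_{k_N}}\ll1$ together with $\lambda_i\asymp i^{2/m}$ and $m\ge2$.
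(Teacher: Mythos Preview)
Your proposal is correct and follows essentially the same route as the paper: reduce to the discrete norm via the transport map, control $\lambda_{u_i}$ to set up a dichotomy that breaks the apparent circularity in applying Proposition~\ref{thm:Lip graph efun}, and then lift back to $L^\infty(\mu)$. The only noteworthy variation is in the decomposition of $\Delta_N u_i$: the paper extracts the leading term with coefficient $\lambda_i^{(N)}$ and bounds the correction using $\|\psi_i\|_{L^\infty(\mu)}\lesssim\lambda_i^{(m-1)/4}$ from Proposition~\ref{thm:efunc gradient bound}, whereas you extract $\lambda_i u_i$ and bound $\|\psi_i^{(N)}\|_{L^\infty(\mu_N)}\lesssim\lambda_i^{m+1}$ via Proposition~\ref{thm:Lip graph efun}; the paper's choice yields the smaller remainder $\lambda_i^{(m+5)/2}(\rho_N/\zeta_N+\zeta_N)$ and avoids an extra invocation of Proposition~\ref{thm:Lip graph efun}, but both corrections are absorbed into the final bound.
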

\begin{proof}
We will first modify the $L^2(\mu)$-bound \eqref{eq:efun rate} to an $L^2(\mu_N)$-bound using the regularity of eigenfunctions of $-\Delta$, which is then lifted to an $L^{\infty}(\mu_N)$-bound using Proposition \ref{thm:pointwise} and \ref{thm:Lip graph efun}. Finally we use regularity of the eigenfuctions again to transfer the $L^{\infty}(\mu_N)$-bound to an $L^{\infty}(\mu)$-bound. To start, we notice that the transport $T_N$ induces a partition of $\M$ by the sets $\{U_i=T_N^{-1}(\{X_i\})\}_{i=1}^N$. Furthermore, the measure preserving property gives $\mu(U_i)=\frac{1}{N}$ and Proposition \ref{prop:transmap} implies that $U_i\subset B_{\M}(X_i,\rho_N)$. We then have
\begin{align*}
    \|\psi_i^{(N)}-\psi_i\|^2_{L^2(\mu_N)}=\frac{1}{N}\sum_{i=1}^N  & |\psi_i^{(N)}(X_i)-\psi_i(X_i)|^2=\sum_{i=1}^N \int_{U_i}|\psi_i^{(N)}(X_i)-\psi_i(X_i)|^2 d\mu(x)  \\
    & \leq \sum_{i=1}^N \int_{U_i} 2|\psi_i^{(N)} (X_i) -\psi_i(x)|^2+2|\psi_i(x)-\psi_i(X_i)|^2 d\mu(x),
\end{align*}
where 
\begin{align*}
    \int_{U_i} |\psi_i(x)-\psi_i(X_i)|^2 d\mu(x) \leq \frac{1}{N} \|\nabla \psi_i\|^2_{\infty}\,\rho_N^2.
\end{align*}
Hence by Proposition \ref{thm:efunc gradient bound} and Weyl's law
\begin{align*}
    \|\psi_i^{(N)}-\psi_i\|^2_{L^2(\mu_N)}\leq 2\|\psi_i^{(N)}\circ T_N-\psi_i\|^2_{L^2(\mu)} +Ci^{\frac{m+1}{m}}\rho_N^2.
\end{align*}
Since $i^{\frac{m}{m+1}}\rho_N^2$ is of higher order than \eqref{eq:efun rate}, we conclude that 
\begin{align}
    \|\psi_i^{(N)}-\psi_i\|^2_{L^2(\mu_N)} \lesssim  i^3  \left(\frac{\rho_N}{\zeta_{N}}+\zeta_{N}\sqrt{\lambda_{i}}\right). \label{eq: L2 mu_n rate}
\end{align}
Now let $g=\psi_i^{(N)}-\tilde{\psi}_i$ with $\tilde{\psi_i}=\psi_i|_{\{X_1,\ldots,X_N\}}$. We have by Proposition \ref{thm:pointwise}
\begin{align*}
    \|\Delta_N g\|_{L^{\infty}(\mu_N)} &\leq \|\Delta_N \psi_i^{(N)} - \Delta \psi_i\|_{L^{\infty}(\mu_N)} +\| \Delta \psi_i - \Delta_N \tilde{\psi}_i\|_{L^{\infty}(\mu_N)}\\
    &\leq \|\lambda_i^{(N)}\psi_i^{(N)}-\lambda_i \psi_i\|_{L^{\infty}(\mu_N)}+C(1+\|\psi_i\|_{\mathcal{C}^3(\M)})\zeta_N\\
    &\leq \lambda_i^{(N)}\|\psi_i^{(N)}-\psi_i\|_{L^{\infty}(\mu_N)} + |\lambda_i^{(N)}-\lambda_i|\|\psi_i\|_{L^{\infty}(\mu_N)}+ C(1+\|\psi_i\|_{\mathcal{C}^3(\M)})\zeta_N\\
    &\leq \lambda_i^{(N)}\|g\|_{L^{\infty}(\mu_N)}+C\lambda_i^{\frac{m+5}{2}}\left(\frac{\rho_N}{\zeta_N}+\zeta_N\right),
\end{align*}
where we have used \eqref{eq:eval rate} and Proposition \ref{thm:efunc gradient bound} in the last step. Therefore recalling the definition in \eqref{eq:lambda_u}, we have 
\begin{align*}
    \lambda_g \leq \lambda_i^{(N)} +C\|g\|^{-1}_{L^{\infty}(\mu_N)}\lambda_i^{\frac{m+5}{2}}\left(\frac{\rho_N}{\zeta_N}+\zeta_N\right). 
\end{align*}
If $\lambda_g\geq \lambda_i^{(N)}+1$, then we have 
\begin{align}
    \|g\|_{L^{\infty}(\mu_N)}\leq C\lambda_i^{\frac{m+5}{2}}\left(\frac{\rho_N}{\zeta_N}+\zeta_N\right). \label{eq:g bound 1} 
\end{align}
Otherwise if $\lambda_g\leq \lambda_i^{(N)}+1$, then Proposition \ref{thm:Lip graph efun} and \eqref{eq: L2 mu_n rate} implies 
\begin{align}
    \|g\|_{L^{\infty}(\mu_N)}\leq C\left[\lambda_i^{(N)}+1\right]^{m+1}\|g\|_{L^1(\mu_N)}\leq C\lambda_i^{m+1}\|g\|_{L^2(\mu_N)} \leq C\lambda_i^{m+1}i^\frac{3}{2}\sqrt{\frac{\rho_N}{\zeta_N}+\zeta_N\sqrt{\lambda_i}},  \label{eq:g bound 2}
\end{align}
where we have used the fact that $\lambda_i^{(N)}\leq C\lambda_i$ and $\|g\|_{L^1(\mu_N)}\leq \|g\|_{L^2(\mu_N)}$. 
Comparing  \eqref{eq:g bound 2} with \eqref{eq:g bound 1}, we see that the error in \eqref{eq:g bound 2} dominates. To finish, we need again to lift the $L^{\infty}(\mu_N)$-bound to a  $L^{\infty}(\mu)$-bound using regularity of the $\psi_i$'s. Notice that  
\begin{align*}
    \|\psi_i^{(N)}\circ T_N-\psi_i\|_{L^{\infty}(\mu)}&=\underset{1\leq j\leq N}{\operatorname{max}}\, \underset{x\in U_j}{\operatorname{sup}}\, |\psi_i^{(N)}\circ T_N(x)-\psi_i(x)| \\
    &\leq \underset{1\leq j \leq N}{\operatorname{max}}\, \underset{x\in U_j}{\operatorname{sup}}\,  \Big(|\psi_i^{(N)}(X_j)-\psi_i(X_j)|+|\psi_i(X_j)-\psi_i(x)|\Big)\\
    &\leq \|\psi_i^{(N)}-\psi_i\|_{L^{\infty}(\mu_N)}+\|\nabla \psi_i\|_{L^{\infty}(\mu)}\rho_N\\
    &\lesssim \lambda_i^{m+1}i^{\frac32}\sqrt{\frac{\rho_N}{\zeta_N}+\zeta_N\sqrt{\lambda_i}},
\end{align*}
where in the last step $\|\nabla \psi_i\|_{L^{\infty}(\mu)}\rho_N$ is a higher order term that we drop.  
\end{proof}

Now we are finally ready to show the $L^{\infty}(\mu)$ convergence of $\Wn$ towards $W^{\M}$.
\begin{proof}[Proof of Theorem \ref{thm:L infty conv}]
Recall that
\begin{align*}
    \Wn&=\sum_{i=1}^{k_N} \left[1+\lambda_i^{(N)}\right]^{-\frac{s}{2}} \xi_i \psi_i^{(N)}\circ T_N, \\
     W^{\M} &=\sum_{i=1}^{\infty} (1+\lambda_i)^{-\frac{s}{2}}\xi_i\psi_i. 
\end{align*}
Consider two intermediate quantities: 
\begin{align*}
    \widetilde{W}_n^{\M}&=\sum_{i=1}^{k_N}(1+\lambda_i)^{-\frac{s}{2}}\xi_i\psi_i^{(N)}\circ T_N, \\
    \widehat{W}_n^{\M}&=\sum_{i=1}^{k_N}(1+\lambda_i)^{-\frac{s}{2}}\xi_i\psi_i.
\end{align*}
By the triangle inequality, 
\begin{align*}
    \mathbb{E}_{\xi}\|\Wn-W^{\M}\|^2_{L^{\infty}(\mu)} 
    &\leq \mathbb{E}_{\xi} \Big(\|\Wn-\widetilde{W}_n^{\M}\|_{L^{\infty}(\mu)}+\|\widetilde{W}_n^{\M}-\widehat{W}_n^{\M}\|_{L^{\infty}(\mu)}+\|\widehat{W}_n^{\M}-W^{\M}\|_{L^{\infty}(\mu)}\Big)^2\\
    &\leq 3\Big(\mathbb{E}_{\xi}\|\Wn-\widetilde{W}_n^{\M}\|^2_{L^{\infty}(\mu)} + \mathbb{E}_{\xi}\|\widetilde{W}_n^{\M}-\widehat{W}_n^{\M}\|^2_{L^{\infty}(\mu)}+ 
    \mathbb{E}_{\xi}\|\widehat{W}_n^{\M}-W^{\M}\|^2_{L^{\infty}(\mu)}\Big),
\end{align*}
so it suffices to bound each term. 
First, by Proposition \ref{thm:efunc gradient bound} and Weyl's law,
\begin{align}
    \mathbb{E}_{\xi}\|\widehat{W}_n^{\M}-W^{\M}\|_{L^{\infty}(\mu)}^2
    &\leq\mathbb{E}_{\xi}\left[\sum_{i=k_N+1}^{\infty}\sum_{j=k_N+1}^{\infty}(1+\lambda_i)^{-\frac{s}{2}}(1+\lambda_j)^{-\frac{s}{2}}|\xi_i||\xi_j| \|\psi_i\|_{L^{\infty}(\mu)}\|\psi_j\|_{L^{\infty}(\mu)}\right]\nonumber\\ 
    &\lesssim \sum_{i=k_N+1}^{\infty}\sum_{j=k_N+1}^{\infty}(1+\lambda_i)^{-\frac{s}{2}}(1+\lambda_j)^{-\frac{s}{2}}\lambda_i^{\frac{m-1}{4}}\lambda_j^{\frac{m-1}{4}}\nonumber\\
    &=\left[\sum_{i=k_N+1}^{\infty}(1+\lambda_i)^{-\frac{s}{2}}\lambda_i^{\frac{m-1}{4}}\right]^2
    %\lesssim \left[\sum_{i=k_N+1}^{\infty}i^{\frac{-2s+m-1}{2m}}\right]^2
    \lesssim  \left[\int _{k_N}^{\infty} x^{\frac{-2s+m-1}{2m}}dx\right]^2 
    \lesssim   k_N^{\frac{-2s+3m-1}{m}}. \label{eq:eq1}
\end{align} 
Similarly,
\begin{align*}
    \mathbb{E}_{\xi}\|\Wn-\widetilde{W}_n^{\M}\|^2_{L^{\infty}(\mu)}
    &\lesssim \left[\sum_{i=1}^{k_N}\left|\left[1+\lambda_i^{(N)}\right]^{-\frac{s}{2}}-\Big[1+\lambda_i\Big]^{-\frac{s}{2}}\right|\|\psi_i^{(N)}\circ T_N\|_{L^{\infty}(\mu)}\right]^2. 
\end{align*}
By Lipschitz continuity of $x^{-s/2}$ on $[1,\infty)$ and \eqref{eq:eval rate}, for $i=1,\ldots,k_N,$
\begin{align*}
    \left|\left[1+\lambda_i^{(N)}\right]^{-\frac{s}{2}}-\Big[1+\lambda_i\Big]^{-\frac{s}{2}} \right|
    &\leq \left[(1+\lambda_i^{(N)})\wedge (1+\lambda_i)\right]^{-\frac{s}{2}-1} \left|\lambda_i^{(N)}-\lambda_i\right| \\
    &\lesssim \left[(1+\lambda_i^{(N)})\wedge (1+\lambda_i)\right]^{-\frac{s}{2}-1} \lambda_i \left(\frac{\rho_N}{\zeta_N}+\zeta_N\sqrt{\lambda_i}\right)\\
    &\lesssim \lambda_i^{-\frac{s}{2}}\left(\frac{\rho_N}{\zeta_N}+\zeta_N\sqrt{\lambda_i}\right).
\end{align*}
By \eqref{eq:efun rate}, if we choose $k_N$ so that 
\begin{align}
    k_N^3\left(\frac{\rho_N}{\zeta_N}+\zeta_N\sqrt{\lambda_{k_N}}\right)\ll 1, \label{eq:kn uniform bound}
\end{align}
then we have, for $i=1,\ldots,k_N,$
\begin{align*}
    \|\psi_i^{(N)}\circ T_N\|_{L^{\infty}(\mu)}\leq \|\psi_i^{(N)}\circ T_N-\psi_i\|_{L^{\infty}(\mu)}+\|\psi_i\|_{L^{\infty}(\mu)} \lesssim \lambda_i^{\frac{m-1}{4}}. 
\end{align*}
Therefore 
\begin{align}
    \mathbb{E}_{\xi}\|\Wn-\widetilde{W}_n^{\M}\|^2_{L^{\infty}(\mu)}
    \lesssim  \left[\sum_{i=1}^{k_N}\lambda_i^{-\frac{s}{2}}\left(\frac{\rho_N}{\zeta_N}+\zeta_N\sqrt{\lambda_i}\right)\lambda_i^{\frac{m-1}{4}}\right]^2
    \lesssim \left(\frac{\rho_N}{\zeta_N}+\zeta_N\sqrt{\lambda_{k_N}}\right)^2, \label{eq:eq2}
\end{align}
where we have used that $\lambda_i^{\frac{-2s+m-1}{4}}$ is summable for $s>\frac{3}{2}m-\frac12$. 
Lastly, by Lemma \ref{lemma:L infty rate},
\begin{align}
    \mathbb{E}_{\xi}\|\widetilde{W}_{n}^{\M}-\widehat{W}_n^{\M}\|^2_{L^{\infty}(\mu)} 
    &\lesssim \left[\sum_{i=1}^{k_N}(1+\lambda_i)^{-\frac{s}{2}}\|\psi_i^{(N)}\circ T_N-\psi_i\|_{L^{\infty}(\mu)}\right]^2\nonumber\\
    & \lesssim \left[\sum_{i=1}^{k_N} (1+\lambda_i)^{-\frac{s}{2}}\lambda_i^{m+1}i^{\frac32} \sqrt{ \frac{\rho_N}{\zeta_N}+\zeta_N\sqrt{\lambda_{i}}}\right]^2\nonumber\\
    & \lesssim \left[1\vee k_N^{\frac{-2s+9m+5}{m}}\right]\left(\frac{\rho_N}{\zeta_N}+\zeta_N\right). 
     \label{eq:eq3}
\end{align}
Combining \eqref{eq:eq1}, \eqref{eq:eq2}, \eqref{eq:eq3}, we have
\begin{align}
    \mathbb{E}_{\xi}\|\Wn-W^{\M} \|^2_{L^{\infty}(\mu)}
    &\lesssim  k_N^{\frac{-2s+3m-1}{m}}  + \left[1\vee k_N^{\frac{-2s+9m+5}{m}}\right]\left(\frac{\rho_N}{\zeta_N}+\zeta_N\right).\label{eq:eq4}
\end{align}
Now we will set $\zeta_N$ and $k_N$ based on the dimension $m$ and the smoothness parameter $s$. 

\textbf{Case 1: $s\leq \frac{9}{2}m+\frac{5}{2}$}. Let $\delta>0$ be arbitrary and set
\begin{align*}
    \begin{cases}
    \zeta_N\asymp N^{-\frac{1}{m+4+\delta}}(\log N)^{\frac{p_m}{2}}, \quad  k_N\asymp N^{\frac{1}{(m+4+\delta)(6+\frac{6}{m})}}(\log N)^{-\frac{mp_m}{12m+12}},\quad \quad & m\leq 4,\\
    \zeta_N\asymp N^{-\frac{1}{2m}}(\log N)^{\frac{p_m}{2}},\quad  k_N\asymp N^{\frac{1}{12m+12}}(\log N)^{-\frac{mp_m}{12m+12}}, \quad \quad &m\geq 5.
    \end{cases}
\end{align*}
One can check that the conditions in \eqref{eq:para scale} and \eqref{eq:kn uniform bound} are satisfied and
we get 
\begin{align*}
  \mathbb{E}_{\xi}\|\Wn-W^{\M} \|^2_{L^{\infty}(\mu)} \lesssim
    \begin{cases}
    N^{-\frac{2s-3m+1}{(m+4+\delta)(6m+6)}}(\log N)^{\frac{p_m(2s-3m+1)}{12m+12}} ,\quad \quad &m\leq 4,\\
    N^{-\frac{2s-3m+1}{m(12m+12)}} (\log N)^{\frac{p_m(2s-3m+1)}{12m+12}}, \quad \quad &m\geq 5.
    \end{cases}
\end{align*}

\textbf{Case 2: $s>\frac{9}{2}m+\frac52$}.
Now \eqref{eq:eq4} simplifies to
\begin{align*}
    \mathbb{E}_{\xi}\|\Wn-W^{\M} \|^2_{L^{\infty}(\mu)} \lesssim  k_N^{\frac{-2s+3m-1}{m}}+\left(\frac{\rho_N}{\zeta_N}+\zeta_N\right).
\end{align*}
Therefore by setting
\begin{align*}
    \begin{cases}
    \zeta_N\asymp N^{-\frac{1}{m+4+\delta}}(\log N)^{\frac{p_m}{2}},\quad  k_N\asymp N^{\frac{m}{(m+4+\delta)(2s-3m+1)}}(\log N)^{-\frac{mp_m}{4s-6m+2}},\quad \quad  & m\leq 4, \\
    \zeta_N\asymp N^{-\frac{1}{2m}}(\log N)^{\frac{p_m}{2}},\quad  k_N\asymp N^{\frac{1}{4s-6m+2}}(\log N)^{-\frac{mp_m}{4s-6m+2}},\quad \quad & m\geq 5,
    \end{cases}
\end{align*}
we have 
\begin{align*}
    \mathbb{E}_{\xi}\|\Wn-W^{\M} \|^2_{L^{\infty}(\mu)} \lesssim
    \begin{cases}
    N^{-\frac{1}{m+4+\delta}}(\log N)^{\frac{p_m}{2}}, \quad \quad &m\leq 4,\\
    N^{-\frac{1}{2m}}(\log N)^{\frac{p_m}{2}}, \quad \quad &m\geq 5.
    \end{cases}
\end{align*}
Again one can check that the conditions in \eqref{eq:para scale} and \eqref{eq:kn uniform bound} are satisfied. This finishes the proof.

\end{proof}

\subsection{Putting Everything Together}\label{sec:combine}
Now we are ready to prove Theorem \ref{thm:Main Thm}. 
\begin{proof}[Proof of Theorem \ref{thm:Main Thm}]
By Subsection \ref{sec:reduction}, it suffices to show that 
\begin{align*}
    \mathbb{E}_{X}\mathbb{E}_{f_0}\Pim\left(f\in\mathcal{I}(L^{\infty}(\mu_{N_n})):\|f-f_0\|_n\geq M_n\eps_n\,|\, \X_{N_n}, \Y_n\right) \xrightarrow{n\rightarrow\infty} 0,
\end{align*}
with the said $\eps_n$. 
Let $A_n$ be the high probability event in Corollary \ref{cor:L infty conv}. Denote $F_n(\X_{N_n},\Y_n):= \Pim(f\in\mathcal{I}(L^{\infty}(\mu_{N_n})):\|f-f_0\|_n\geq M_n\eps_n\, |\, \Y_n)$. We have 
\begin{align*}
    \mathbb{E}_X\mathbb{E}_{f_0} F_n(\X_{N_n},\Y_n) = \mathbb{E}_{X}[\mathbb{E}_{f_0}F_n(\X_{N_n},\Y_n)]\mathbf{1}_{A_n} + \mathbb{E}_{X}[\mathbb{E}_{f_0}F_n(\X_{N_n},\Y_n)]\mathbf{1}_{A_n^c}.
\end{align*}
Since $F_n(\X_{N_n},\Y_n)\leq 1$, the second term is upper bounded by $\mathbb{P}_X(A_n^c)\rightarrow 0$. It then suffices to show $\mathbb{E}_{f_0}F_n(\X_{N_n}, \Y_n)\rightarrow 0$ in the event of $A_n$. By Corollary \ref{cor:L infty conv} we can have $10\mathbb{E}_{\xi}\|\Wn-W^{\M}\|^2_{L^{\infty}(\mu)}\leq n^{-1}$ if we set a large enough proportion constant for $N_n$. This together with Theorem \ref{thm:cont rate} and \cite{van2008rates}[Theorem 2.2] implies that there exists sets  $B_n\subset \mathcal{I}( L^{\infty}(\mu_{N_n}))$ (by taking the intersection of the sets $B_n$ in Theorem \ref{thm:cont rate} and $\mathcal{I}( L^{\infty}(\mu_{N_n}))$)  so that, for the same $\eps_n$ in the theorem statement,
\begin{align*}
    \log N(6\eps_n,B_n,\|\cdot\|_{L^{\infty}(\mu)})&\leq  24Cn\eps_n^2, \\
    \pim(B_n^c)&\leq e^{-4Cn\eps_n^2},\\
    \pim(\|w-\Phi^{-1}(f_0)\|_{L^{\infty}(\mu)}<4\eps_n)&\geq e^{-4n\eps_n^2},
\end{align*}
where $\pim=\mathcal{L}(\Wn)$. 

\textbf{Case 1: Regression}. In the regression case, since $\Phi$ is the identity, we have $\Pim=\pim$ and the above three conditions are true with $\pim$ replaced by $\Pim$. Furthermore, since $\|\cdot\|_n$ is upper bounded by $\|\cdot\|_{L^{\infty}(\mu)}$, the above conditions remain true for the empirical norm. This together with the general results in \cite{ghosal2017fundamentals}[Section 8.3.2] proves the assertion. 

\textbf{Case 2: Classification}. In the classification setting, by \cite{ghosal2017fundamentals}[Lemma 2.8] the average Kullback-Leibler divergence and variance in \eqref{eq:KL neighbor} between the densities after applying the link function $\Phi$ are upper bounded by a multiple of the empirical norm. In particular if we set $\mathcal{B}_n=\Phi(B_n)$, then the conditions in \eqref{eq:entropy}, \eqref{eq:complement} and \eqref{eq:prior mass} hold, for a possibly different proportion constant for $\eps_n$. Moreover, as discussed before, the root average square Hellinger distance $d_n$ in \eqref{eq:RASH} is equivalent to the empirical norm. Hence the result follows by \cite{ghosal2007convergence}[Theorem 4].
\end{proof}

\section{Discussion}\label{sec:discussion}
In this paper we have analyzed graph-based SSL using recent results on spectral convergence of graph Laplacians and standard Bayesian nonparametrics techniques.  We show that, for a suitable choice of prior constructed with sufficiently many unlabeled data, the posterior contracts around the truth at a rate that is minimax optimal up to logarithmic factor. Our theory applies to both regression and classification. 

We have assumed throughout that the $X_i$'s are uniformly distributed on $\M.$ Our results can be generalized to nonuniform positive density $q$ with respect to the volume form. In such a case, the continuum field is the Gaussian measure $\mathcal{N}(0, (I-\Delta_{q}))^{-s}$ where  $-\Delta_q:=-\frac{1}{q}\text{div} (q^2 \nabla )$ is a weighted Laplacian-Beltrami operator, for which spectral convergence results can be found in \cite{trillos2019error}. Since we do not have an explicit dependence of the proportion constant on the index $i$ as in Proposition \ref{thm:specRate}, we have chosen to present our result in the uniform case. However, similar conclusions should be expected to hold in the general case.

The Bayesian methodology we analyzed is inspired by popular existing graph-based optimization methods for SSL \citep{belkin2004regularization,zhu2005semi}. In order to achieve optimal contraction, the prior smoothness parameter needs to match the regularity of the truth, which is rarely available in applications. An important research direction stemming from our work is the development and analysis of adaptive Bayesian SSL methodologies that can achieve optimal contraction without a priori smoothness information.  We expect that the existing results on adaptive estimation on manifolds \citep{castillo2014thomas} and on large graphs \citep{kirichenko2017estimating} will be an important stepping stone in this direction.

\section*{Acknowledgment}
Both authors are thankful for the support of NSF and NGA through the grant DMS-2027056. 
The work of DSA was also partially supported by the NSF Grant DMS-1912818/1912802. 
\bibliographystyle{abbrvnat} 

\bibliography{reference}
\end{document}